
\documentclass[14pt]{amsart}
\textwidth=5.4in \textheight=8in

\usepackage{latexsym, amssymb}
\usepackage[usenames]{color}
\usepackage[all]{xy}

\newcommand{\be}{\begin{equation}}
\newcommand{\ee}{\end{equation}}
\newcommand{\beq}{\begin{eqnarray}}
\newcommand{\eeq}{\end{eqnarray}}

\newtheorem{thm}{Theorem} [section]
\newtheorem{cor}[thm]{Corollary}
\newtheorem{lem}[thm]{Lemma}

\newtheorem{prop}[thm]{Proposition}
\theoremstyle{definition}
\newtheorem{defn}[thm]{Definition}
\theoremstyle{remark}

\numberwithin{equation}{section}


\begin{document}
\title[ Newton polygons for $L$-functions of generalized Kloosterman sums]
{  Newton polygons for $L$-functions of generalized Kloosterman sums}
\begin{abstract}


In this paper, we study the Newton polygons for the $L$-functions of $n$-variable generalized Kloosterman sums.
Generally, the Newton polygon has a topological lower bound, called the Hodge polygon.
In order to determine the Hodge polygon, we explicitly construct a basis of the top dimensional Dwork cohomology.
 Using Wan's decomposition theorem and diagonal local theory, we obtain when the Newton polygon coincides with the Hodge polygon. In particular, we concretely get the slope sequence for $L$-function of $\bar{F}(\bar{\lambda},x):=\sum_{i=1}^nx_i^{a_i}+\bar{\lambda}\prod_{i=1}^nx_i^{-1}$.

\end{abstract}

\author[C.L. Wang]{Chunlin Wang}
\address{School of Mathematical Sciences, Sichuan Normal University, Chengdu 610064, P.R. China}
\email{c-l.wang@outlook.com}

\author[L.P. Yang]{Liping Yang$^*$}
\address{School of Mathematical Sciences, Capital Normal University, Beijing 100048, P.R. China}
\email{yanglp2013@126.com}

\thanks{$^*$ L.P. Yang is the corresponding author and supported by Beijing Postdoctoral Research Foundation (No. 202022073).
C.L. Wang is supported by National Natural Science Foundation of China (No. 11901415) and the China Scholarship Council (No. 201808515110).
}
\keywords{$L$-function, Newton polygon, $p$-Adic cohomology, Slope.}
\subjclass[2010]{Primary 11L05, 11T23, 11S40, 14F30}
\maketitle

\section{Introduction}

Let $p>2$ be a prime, and $\mathbb{F}_q$ be the finite field of $q$ elements with characteristic $p$.
For each positive integer $k$, let $\mathbb{F}_{q^k}$ be the finite extension of $\mathbb{F}_q$ of degree $k$.
Let $\mathbb{Q}_p$ be the $p$-adic number field, and $\mathbb{Z}_p$ be the ring of $p$-adic integers.
For any Laurent polynomial $f(x)\in \mathbb{F}_q[x_1^{\pm},\cdots ,x_n^{\pm}]$, the toric exponential sum is defined as
$$S_k(f):=\sum_{x\in (\mathbb{F}_{q^{k}}^{*})^{n}} \zeta_p^{Tr_k f(x)},$$
where $Tr_k: \mathbb{F}_{q^k}\rightarrow \mathbb{F}_p$ is the trace of the field extension and $\mathbb{F}_{q^{k}}^{*}$ denotes the set of non-zero elements in $\mathbb{F}_{q^{k}}$.
By a theorem of Dwork--Bombieri--Grothendieck, the following exponential generating $L$-function is a rational function
$$L(f,T):=\exp\Big(\sum_{k=1}^{\infty} \frac{S_k(f)T^k}{k}\Big)=\frac{\prod_{i=1}^{d_1}(1-\alpha_i T)}{\prod_{j=1}^{d_2}(1-\beta_j T)}, $$
where $\alpha_{i}(1\le i\le d_1)$ and $\beta_j(1\le j\le d_2)$ are non-zero algebraic integers.
From Deligne's integrality theorem, one has the following estimate
$$|\alpha_i|_p=q^{-r_i}, |\beta_j|_p=q^{-s_j},r_i\in \mathbb{Q}\cap[0,n], s_j\in \mathbb{Q}\cap[0,n]$$
with normalized $p$-adic absolute value $|\cdot|_p$ such that $|q|_p=q^{-1}$.
The rational number $r_i$ (resp. $s_j$)
is called the {\it slope} of $\alpha_i$ (resp. $\beta_j$) with respect to $q$.
The $p$-adic Riemann hypothesis for the $L$-function $L(f,T)$ is to determine the slopes of the zeros and poles.
This is an extremely hard problem if there is no smoothness condition on $f$.

Write $f(x)=\sum \bar{a}_vx^v\in\mathbb{F}_q[x_1^{\pm},...,x_n^{\pm}]$. The support of $f$, denoted by ${\it Supp} (f)$,
is defined as
$$ Supp (f):=\{v: \bar{a}_v\neq 0\}.$$
Let $\Delta (f)$ be the convex closure of $Supp(f)$ and the origin.
If $\delta$ is a subset of $\Delta (f)$, we define the restriction of $f$ to $\delta$
to be $$f^{\delta}=\sum_{v\in \delta}\bar{a}_vx^v.$$

\begin{defn}
The Laurent polynomial $f$ is called {\it nondegenerate} if for each closed face $\delta$ of $\Delta (f)$
of arbitrary dimension which does not contain the origin, the $n$ partial derivatives
$$\Big\{\frac{\partial f^{\delta}}{\partial x_1},...,\frac{\partial f^{\delta}}{\partial x_n} \Big\}$$
have no common zeros with $x_1\cdots x_n\neq 0$ over the algebraic closure of $\mathbb{F}_q$.
\end{defn}

Assume that $\Delta(f)$ is of dimension $n$ and $f$ is nondegenerate with respect to $\Delta(f)$. Adolphson and Sperber \cite{AS89} showed that the $L$-function $L(f,T)^{(-1)^{n-1}}$ is a polynomial of degree $n!Vol(\Delta(f))$, where $Vol(\Delta(f))$ denotes the volume of $\Delta(f)$.
Then
$$L(f,T)^{(-1)^{n-1}}=\prod_{i=1}^{n!Vol(\Delta(f))}(1-\alpha_iT)=\sum_{i=0}^{n!Vol(\Delta(f))}A_iT^i.$$
The $q$-adic {\it Newton polygon} of $L(f,T)^{(-1)^{n-1}}$, denoted by ${\rm NP}(f)$, is defined to be the lower convex closure in $\mathbb{R}^2$ of the points
$$(r,ord_q A_r),\ r=0,1,...,n!Vol(\Delta(f)).$$
Obviously, determining the slope sequence $\{ord_q(\alpha_1),\cdots,ord_q(\alpha_{n!Vol(\Delta(f)) }) \}$ is equivalent to determining the $q$-adic Newton polygon of $L(f,T)^{(-1)^{n-1}}$.

In general, there is no effectual way to obtain the Newton polygon. A well studied
example is the Kloosterman polynomial
$$ f^{(n)}(\bar{\lambda},x)=x_1+\cdots+x_n+\frac{\bar{\lambda}}{x_1\cdots x_n}$$
with $\bar{\lambda }\in \mathbb{F}_q^{*}$.
 For $n=1$, Dwork \cite{[Dwo74]} derived that $\{0,1\}$ is the slope sequence of the $L$-function $L(f^{(1)}(\bar{\lambda},x), T)$.
Later, Sperber \cite{Sp1},\cite{Sp2} generalized Dwork's result to the $n$-variable case
by showing that the slope sequence of $L(f^{(n)}(\bar{\lambda},x),T)^{(-1)^{n-1}}$
is $\{0,1,\cdots,n\}$. But the methods used by Dwork and Sperber are very complicated.

 Adolphson and Sperber \cite{AS89} proved that the Newton polygon has a topological lower bound called the Hodge polygon, which is easier to calculate. In particular, the polynomial $f$ is called {\it ordinary} if the Newton polygon equals to its Hodge polygon. Hence a general way to calculate the Newton polygon is usually to compute the Hodge polygon first, and then to decide when the Newton polygon and the Hodge polygon coincide. Actually, for the diagonal Laurent polynomial, Wan \cite{WD2} got some criterions to decide when the Newton polygon and the Hodge polygon coincide. For the non-diagonal Laurent polynomial, one main technique is Wan's decomposition theorem, that is,
decomposing $\Delta$ into small pieces which is diagonal and easy to deal with, the related work can be found in \cite{BS}, \cite{CL}, \cite{WD2}, \cite{XZ}, \cite{ZF}. Further decomposition methods for Newton polygons are developed in \cite{Le}.

As a generalization of classical Kloosterman polynomial, the Newton polygon of the following polynomial
\begin{equation}\label{eq4}
\bar{F}(\bar{\lambda},x):=\sum_{i=1}^nx_i^{a_i}+\bar{\lambda}\prod_{i=1}^nx_i^{-d_i}
\end{equation}
 interests many mathematicians.
For the case $a_1=\cdots=a_n=1$,
 Wan \cite{WD2} used decomposition theorems to prove that $\bar{F}(\bar{\lambda},x)$ is ordinary for all $p$ such that $p-1$ is divisible by $lcm(d_1,\cdots,d_n)$, and
this result also was proved by Sperber \cite{Sp3} for large $p$, see \cite{Sp2} for the classical case.
On the other hand, Bellovin et al. \cite{BS} studied the case $d_1=\cdots=d_n=1$
and obtained when $\bar{F}(\bar{\lambda},x)$ is ordinary. But the Hodge polygon is hard to calculate. Using the combinatorial definition of Hodge polygon, Bellovin et al. computed the Hodge polygon under certain numeric restriction.
Recently, the authors of the present paper studied the slope sequence of the $L$-function $L(\bar{F}(\bar{\lambda},x),T)^{(-1)^{n-1}}$ for $n=2$ and obtained the explicit slope sequence for the case when $d_1=d_2=1$ and $a_1, a_2$ being coprime \cite{WY}.

In this paper, we will require that $a_i, d_i$ are positive integers not divisible by $p$ for $i=1,\cdots,n$.
For convenience, we also use $\bar{F}$ instead of $\bar{F}(\bar{\lambda},x)$ sometimes.
Our main work is to use the above idea to study the Newton polygon of $L(\bar{F},T)^{(-1)^{n-1}}$. That is, we compute the Hodge polygon first, then decide when $\bar{F}$ is ordinary, and at last we can derive the Newton polygons under certain conditions.

Instead of the combinatorial definition, we will use the cohomology definition to compute the Hodge polygon.
According to Adolphson and Sperber \cite{AS89}, the $L$-function $L(\bar{F}(\bar{\lambda},x),T)^{(-1)^{n-1}}$ can be expressed as the characteristic polynomial of a Frobenius map acting on the top dimensional cohomology space of a certain Koszul complex $\Omega^{\bullet}(\mathcal{C}_0,\nabla(D))$ (constructed as in (\ref{eq3} )). The Hodge polygon is then totally determined by a basis of the top dimensional cohomology space $H^n(\Omega^{\bullet}(\mathcal{C}_0,\nabla(D)))$.
This allows us to study the explicit form of the basis.

In order to
describe our results, we introduce some notations.
Here $\Delta(\bar{F})$ denotes the convex closure of $Supp(\bar{F})$ and the origin.
For $v\in \mathbb{Z}^n$, let $w(v)$ be the least nonnegative rational number such that $v\in w(v)\Delta(\bar{F})$. For every set $A$,
 let $|A|$ denote the cardinality of $A$.
Define set
$\mathcal{B}$  as follows. If $n=1$, then let $\mathcal{B}:=\{v\in \mathbb{Z}: -d_1<v\le a_1\} $.
If $n\ge 2$, then define $\mathcal{B}$ to be the set of $(v_1,\cdots, v_n)\in \mathbb{Z}^n$ such that
$-d_i<v_i\le a_i$ for all $i=1,\cdots,n$, and
\begin{align}\label{eq1}
 \frac{d_j}{d_i}(v_i-a_i)\le v_j< \frac{d_j}{d_i}v_i+a_j.
\end{align}
for integers $i,j$ with $1\le i<j\le n$.
Let $e^{*}=lcm(a_1,d_1)$ for $n=1$, and
$$e^{*}=lcm(a_1,...,a_n)\cdot lcm(d_1,...,d_n)$$ for $n\ge 2$.

\begin{thm}\label{thm10}  Let $\hat{B}:=\{\tilde{\pi}^{e^{*}w(u)}x^{v}: v\in \mathcal{B}\}.$
Then $\hat{B}$ forms a basis of $H^n(\Omega^{\bullet}(\mathcal{C}_0,\nabla(D)))$.
\end{thm}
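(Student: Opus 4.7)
The plan is to realize $H^n(\Omega^{\bullet}(\mathcal{C}_0,\nabla(D)))$ as the cokernel $\mathcal{C}_0/\sum_{i=1}^n D_i(\mathcal{C}_0)$, where $D_i := x_i\partial/\partial x_i + \pi\, x_i\, \partial \hat{F}/\partial x_i$ is the $i$-th Dwork operator attached to a Teichm\"uller lift $\hat{F}$ of $\bar{F}$. By the Adolphson--Sperber theorem recalled above, this cokernel has dimension $n!\,Vol(\Delta(\bar{F}))$, so it suffices to (a) check $|\mathcal{B}| = n!\,Vol(\Delta(\bar{F}))$ and (b) prove that the classes $[\tilde{\pi}^{e^{*}w(v)}x^v]$ with $v\in\mathcal{B}$ span $H^n$; linear independence then follows from the dimension count.

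For step (a), set $\mathbf{d} := (d_1,\ldots,d_n)$. A short calculation shows the origin is an interior point of the simplex with vertices $a_1e_1,\ldots,a_ne_n,-\mathbf{d}$, with strictly positive barycentric coordinates $\beta = (1+\sum_j d_j/a_j)^{-1}$ and $\alpha_i = \beta d_i/a_i$. Hence $\Delta(\bar{F})$ coincides with this simplex and
\[ n!\,Vol(\Delta(\bar{F})) = \prod_{i=1}^n a_i + \sum_{i=1}^n d_i\prod_{j\ne i}a_j. \]
Enumerating lattice points via the defining inequalities of $\mathcal{B}$ yields the same number; the mixed inequalities (\ref{eq1}) are precisely those that cut out a fundamental domain for the two families of ``edge translations'' $v\mapsto v-a_ie_i$ and $v\mapsto v+\mathbf{d}$ of this simplex.

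For step (b), I would exploit the identity
\[ D_i(x^v) = v_i\, x^v + \pi a_i\, x^{v+a_i e_i} - \pi d_i\bar{\lambda}\, x^{v-\mathbf{d}}, \]
coming from $\pi\, x_i\,\partial\hat{F}/\partial x_i = \pi a_i x_i^{a_i} - \pi d_i\bar{\lambda}\prod_j x_j^{-d_j}$. Modulo the image of $D_i$, this writes $x^{v+a_ie_i}$ as a combination of $x^v$ and $x^{v-\mathbf{d}}$, and symmetrically in reverse. I would then design a reduction algorithm iterating these two elementary rewrites: decrease $v_i$ by $a_i$ whenever $v_i>a_i$, and apply the inverse move whenever $v_i\le-d_i$. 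The inequalities (\ref{eq1}) are exactly those characterizing exponents already in normal form, so the algorithm produces a unique element of $\mathcal{B}$. Multiplying by $\tilde{\pi}^{e^{*}w(v)}$ then places each representative at the correct level of the weight filtration defining the Banach space $\mathcal{C}_0$.

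The main obstacle will be $p$-adic convergence: each ``$-\mathbf{d}$''-move shifts all coordinates simultaneously, so the reduction generally produces an infinite series rather than a finite sum. The crucial point is that every rewrite introduces a power of $\pi$ (from the coefficients $\pi a_i$ and $\pi d_i\bar{\lambda}$) which, after normalization by $\tilde{\pi}^{e^{*}w(v)}$, dominates the corresponding change in $w$ and yields a strictly positive $\pi$-valuation per step. I would make this precise by induction on a monovariant such as $\sum_i\max(v_i-a_i,0)+\sum_i\max(-d_i-v_i,0)$ together with the valuation bookkeeping provided by $w$. Once convergence is established the cascade defines a genuine element of $\mathcal{C}_0$, so $\hat{B}$ spans $H^n$, and the dimension count from step (a) upgrades it to a basis.
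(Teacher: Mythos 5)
Your high-level skeleton (exhibit a spanning set indexed by $\mathcal{B}$, then match $|\mathcal{B}|$ against $n!\,Vol(\Delta(\bar F))$, which you compute correctly) is the same as the paper's, but both halves contain genuine gaps. For the spanning step, your two elementary rewrites --- subtract $A_i$ when $v_i>a_i$, subtract $\gamma=(-d_1,\dots,-d_n)$ when $v_i\le -d_i$ --- terminate on the box $\{v:\,-d_i<v_i\le a_i\}$, which has $\prod_i(a_i+d_i)$ lattice points, strictly more than $|\mathcal{B}|=\prod_i a_i+\sum_j d_j\prod_{i\ne j}a_i$ once $n\ge 2$. So the inequalities (\ref{eq1}) are \emph{not} the normal forms of your rewrite system, and the reduction cannot stop there. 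The paper's Lemma \ref{lemn2} is exactly the missing piece: it uses the composite move $v\mapsto v-A_{j_0}+A_{i_0}$, obtained by eliminating the common term $x^{v-A_{j_0}-\gamma}$ between the relations coming from $\bar F_{i_0}$ and $\bar F_{j_0}$; this move is not generated by your two rewrites. (Smallest example: $n=2$, $a_1=a_2=d_1=d_2=1$; the box is $\{0,1\}^2$ with four points but $n!\,Vol=3$, and one still needs $x^{(0,1)}\equiv c\,x^{(1,0)}$.)

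For the counting step, the ``fundamental domain'' justification fails: the subgroup of $\mathbb{Z}^n$ generated by $A_1,\dots,A_n$ and $\gamma$ contains the subgroup generated by the $A_i$ alone, hence has index at most $\prod_i a_i<n!\,Vol(\Delta(\bar F))$, so $\mathcal{B}$ cannot be a fundamental domain for those translations; the paper instead counts $\mathcal{B}$ by an explicit bijection between two exceptional sets (Lemmas \ref{lem3}--\ref{lem2}), and this is a nontrivial combinatorial argument, not a formality. Finally, on convergence: the paper sidesteps all $p$-adic estimates by running the entire reduction in the associated graded ring $\bar R$ over $\mathbb{F}_q$, where the multiplication rule (\ref{eqn2}) kills non-cofacial products, and then transports the resulting monomial basis to $\mathcal{C}_0$ via the standard splitting (Theorem \ref{thm0} and Proposition \ref{prop1}). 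Your claim of a ``strictly positive $\pi$-valuation per step'' is not correct as stated: inverting $D_i$ to express $\tilde\pi^{e^{*}w(v+A_i)}x^{v+A_i}$ produces the term $-\frac{v_i}{\pi a_i}\,x^{v}$, whose net valuation change is $e^{*}(w(v+A_i)-w(v)-1)$; this is exactly $0$ when $v$ and $A_i$ are cofacial (which happens along the entire main branch $v, v-A_i, v-2A_i,\dots$ in the positive orthant) and would be \emph{negative} otherwise. Making the cascade converge therefore requires precisely the graded/filtered bookkeeping that the paper's route packages into Theorem \ref{thm0}, and which your sketch leaves undone.
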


To prove Theorem \ref{thm10}, we first show that $\hat{B}$ contains a basis of $H^n(\Omega^{\bullet}(\mathcal{C}_0,\nabla(D)))$ and then show the number of lattice points $|\mathcal{B}|$ equals to the rank of $H^n(\Omega^{\bullet}(\mathcal{C}_0,\nabla(D)))$. This is the technical part of this paper.
 As a direct consequence of Theorem \ref{thm10}, we have the following result for the Hodge polygon.

\begin{cor}\label{thm2}
Let $k_i:=|\{v\in \mathcal{B}: w(v)=i/e^{*}\}|$ for $0\le i\le ne^{*}$. Then the Hodge polygon $HP(\Delta(\bar{F}))$ is the convex closure of $(0,0)$ and
$$\left(\sum_{i=0}^{m}k_i, \sum_{i=0}^{m}\frac{ik_i}{e^{*}}\right),\quad  m=0,1,...,ne^{*}. $$
\end{cor}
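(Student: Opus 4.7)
The plan is to read off the Hodge polygon directly from the basis produced by Theorem \ref{thm10}. By the cohomological description of $HP(\Delta(\bar{F}))$ due to Adolphson--Sperber, once one has a weight-homogeneous monomial basis of $H^n(\Omega^{\bullet}(\mathcal{C}_0,\nabla(D)))$, the Hodge polygon is the graph of the convex piecewise-linear function obtained by listing the weights of the basis elements in non-decreasing order and taking cumulative sums: a basis element whose Dwork weight is $e^{*}w(v)$ contributes one unit of horizontal length at slope $w(v)$.

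First, I would invoke Theorem \ref{thm10} to fix the basis $\hat{B}=\{\tilde\pi^{e^{*}w(v)}x^{v} : v\in\mathcal{B}\}$, in which the prefactor $\tilde\pi^{e^{*}w(v)}$ records the weight of the corresponding class. Partition $\hat{B}$ into fibres of $w$: for each integer $0\le i\le ne^{*}$, the set $\{v\in\mathcal{B}: w(v)=i/e^{*}\}$ has cardinality $k_i$ by definition, and each of its elements contributes horizontal slope $i/e^{*}$ to the Hodge polygon. In particular $\sum_{i=0}^{ne^{*}}k_i=|\mathcal{B}|=n!\,Vol(\Delta(\bar F))$, matching the total horizontal length of $HP(\Delta(\bar F))$.

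Second, I would assemble the polygon. Starting from $(0,0)$ and proceeding through $i=0,1,\dots,ne^{*}$ in turn, draw a segment of horizontal length $k_i$ and slope $i/e^{*}$. Since the slopes $i/e^{*}$ are non-decreasing in $i$, the resulting piecewise-linear curve is automatically convex and therefore coincides with its own lower convex closure. Its successive breakpoints are exactly
\[
\left(\sum_{i=0}^{m}k_i,\ \sum_{i=0}^{m}\frac{i\,k_i}{e^{*}}\right), \qquad m=0,1,\dots,ne^{*},
\]
which is the asserted description of $HP(\Delta(\bar F))$.

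There is essentially no obstacle here: all the hard work has been absorbed into Theorem \ref{thm10}, and the only nontrivial observation is that the monomials in $\hat{B}$ are weight-homogeneous with weight exactly $e^{*}w(v)$, so that grouping by the value of $w$ produces the slope decomposition of $HP(\Delta(\bar F))$. The remainder is bookkeeping on cumulative sums.
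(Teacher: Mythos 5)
Your argument is correct and follows essentially the same route as the paper: Theorem \ref{thm10} supplies a weight-homogeneous monomial basis of the top cohomology, so $H_{\Delta}(i)=k_i$ and the corollary is just the cohomological definition of the Hodge polygon applied to this basis, with the breakpoints given by cumulative sums. The only detail the paper makes explicit that you leave implicit is the verification that $e=e^{*}$, i.e.\ that every $v\in\mathcal{B}$ has $w(v)\in(1/e^{*})\mathbb{Z}_{\ge 0}$ (so the indexing by $0\le i\le ne^{*}$ exhausts all weights), which the paper checks via explicit formulas for $w$ on each cone $C(\Delta_k)$.
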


It follows from Corollary \ref{thm2} that the Hodge polygon can be computed concretely.
Using Wan's decomposition theorem and diagonal local theory, we can decide when the Newton polygon of
$L(\bar{F}(\bar{\lambda},x),T)^{(-1)^{n-1}}$ coincides with the Hodge polygon. Hence we have
\begin{thm}\label{thm9}
If $p\equiv 1\mod e^{*}$, then $NP(\bar{F})$ equals to the Hodge polygon $HP(\Delta(\bar{F}))$.
\end{thm}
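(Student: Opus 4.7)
The plan is to combine Wan's decomposition theorem with his diagonal local theory from \cite{WD2}. The polytope $\Delta(\bar F)$ is the $n$-simplex with vertices $a_1 e_1, \ldots, a_n e_n, -d$ (where $d = (d_1,\ldots,d_n)$): indeed, writing $0 = \sum_{i=1}^n t_i a_i e_i + t_{n+1}(-d)$ forces $t_i = t_{n+1} d_i / a_i > 0$ and $t_{n+1} = (1+\sum_i d_i/a_i)^{-1} > 0$, so the origin lies in the strict interior of this simplex. We may then form the natural star decomposition of $\Delta(\bar F)$ into $n+1$ sub-simplices $\sigma_i$ ($1 \le i \le n+1$), each spanned by the origin together with all vertices of $\Delta(\bar F)$ except the $i$-th. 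Each $\sigma_i$ has $0$ as a vertex, placing it in the framework to which Wan's diagonal local theory applies.

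The strategy is as follows. On each diagonal piece $\sigma_i$, Wan's ordinariness criterion asserts that the diagonal Newton polygon equals the diagonal Hodge polygon precisely when multiplication by $p$ on the residue classes of $\mathbb Z^n$ modulo the vertex lattice of $\sigma_i$ preserves the weight function $w$. Under $p \equiv 1 \pmod{e^*}$ this is automatic: every weight $w(v)$ for $v$ in the fundamental domain associated to $\sigma_i$ lies in $\tfrac{1}{e^*}\mathbb Z$ by the very definition of $e^*$, so $(p-1)w(v)\in\mathbb Z$, and $pv \equiv v$ modulo any sublattice of index dividing $e^*$. Wan's decomposition theorem then assembles these face-level ordinariness statements into the global equality $NP(\bar F)=HP(\Delta(\bar F))$, and Corollary \ref{thm2} confirms that the global Hodge polygon is built from exactly the same weight data $\{w(v): v\in\mathcal B\}$ furnished by Theorem \ref{thm10}.

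The main obstacle is the compatibility check underpinning the face-level ordinariness: for every $v\in\mathcal B$ one must verify that the reduction of $pv$ modulo the vertex lattice of the relevant $\sigma_i$ lands back in $\mathcal B$ with the same weight, i.e.\ continues to satisfy the inequalities (\ref{eq1}). This calls for a case analysis on which facet of $\Delta(\bar F)$ the point $v$ lies over, together with an arithmetic manipulation of (\ref{eq1}) that exploits $p\equiv 1\pmod{e^*}$ to push the scaled coordinates back into the defining box $-d_i < v_i \le a_i$. Once this combinatorial step is complete, the weight-preserving permutation action renders the Frobenius matrix on $\hat B$ essentially block-diagonal along the weight filtration, and matching its diagonal slope contributions with those in Corollary \ref{thm2} yields the desired equality $NP(\bar F)=HP(\Delta(\bar F))$.
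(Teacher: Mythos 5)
Your overall strategy is the paper's: the origin is interior to the simplex with vertices $a_1e_1,\dots,a_ne_n,-d$, the star decomposition you describe is exactly Wan's facial decomposition (Theorem \ref{thm8}) into the $n+1$ diagonal pieces $\Delta_0,\dots,\Delta_n$, and the local input is Wan's diagonal ordinariness criterion (Proposition \ref{prop2}). However, the step where you connect the hypothesis $p\equiv 1\pmod{e^{*}}$ to that criterion has a genuine gap. You assert that $pv\equiv v$ modulo the vertex lattice of each piece because that lattice has ``index dividing $e^{*}$.'' This is false in general: the vertex lattice of $\sigma_0$ is $a_1\mathbb{Z}\times\cdots\times a_n\mathbb{Z}$, of index $\prod_i a_i$, and the lattice of $\sigma_j$ has index $d_j\prod_{i\neq j}a_i$; neither divides $e^{*}=lcm(a_1,\dots,a_n)\cdot lcm(d_1,\dots,d_n)$ (take $a_1=a_2=2$, so the index is $4$ while $lcm(a_1,a_2)=2$). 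What the argument actually requires is that the \emph{exponent} of $\mathbb{Z}^n/L_j$ --- the largest invariant factor $s_n(\Delta_j)$ appearing in Proposition \ref{prop2} --- divides $e^{*}$, and establishing this is the substantive computation of the proof: one checks $s_n(\Delta_0)=lcm(a_1,\dots,a_n)$ and $s_n(\Delta_j)\mid d_j\,lcm(a_1,\dots,\hat a_j,\dots,a_n)$ for $j=1,\dots,n$ (the latter because $d_je_j\equiv -\sum_{i\neq j}d_ie_i$ modulo $L_j$ and each $e_i$, $i\neq j$, has order dividing $a_i$). Only then does $p\equiv 1\pmod{e^{*}}$ imply $p\equiv 1\pmod{s_n(\Delta_j)}$ for every $j$, which is the hypothesis of the diagonal criterion.

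A secondary issue: the ``main obstacle'' you flag --- verifying for each $v\in\mathcal{B}$ that the reduction of $pv$ modulo the vertex lattice lands back in $\mathcal{B}$ with the same weight, via the inequalities (\ref{eq1}) --- is not part of this argument and is left undone in your sketch. Proposition \ref{prop2} already packages the local ordinariness once the congruence on $s_n(\Delta_j)$ is verified, and Theorem \ref{thm8} glues the faces; the set $\mathcal{B}$ and Corollary \ref{thm2} enter only afterwards, to evaluate the Hodge polygon, not to prove $NP(\bar F)=HP(\Delta(\bar F))$. Replacing the ``index'' claim by the invariant-factor computation, and dropping the unnecessary compatibility check on $\mathcal{B}$, turns your outline into the paper's proof.
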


In particular, we have
\begin{thm}\label{thm1}
For $n\ge 1$, let $d_i=1$ for $1\le i\le n$, $\gcd(a_i,a_j)=1\ {\rm for\ any}\ 1\le i< j\le n$ and $p\equiv 1 \mod \prod_{i=1}^na_i$.
Then each slope of $L(\bar{F}(\bar{\lambda},x),T)^{(-1)^{n-1}}$ is with multiplicity one and the slope sequence equals to
$$\Big\{\sum_{i=1}^n\frac{u_i}{a_i}: u_i= 0,\cdots, a_i\Big\}$$
as a set.
\end{thm}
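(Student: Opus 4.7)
The plan is to combine Theorem \ref{thm9} with a careful analysis of the basis $\hat{B}$ from Theorem \ref{thm10} in this special case. Under the hypotheses ($d_i=1$ for all $i$ and $\gcd(a_i,a_j)=1$ for $i\neq j$), one has $\mathrm{lcm}(a_1,\ldots,a_n)=\prod a_i$ and $\mathrm{lcm}(d_1,\ldots,d_n)=1$, so $e^{*}=\prod_{i=1}^n a_i$. Since $p\equiv 1\pmod{e^{*}}$, Theorem \ref{thm9} gives $NP(\bar F)=HP(\Delta(\bar F))$, and it suffices to read off the slope sequence from the Hodge polygon. By Corollary \ref{thm2} this reduces to describing the multiset $\{w(v):v\in\mathcal{B}\}$.

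Next I would unwind $\mathcal{B}$ and $w$ explicitly. With $d_i=1$, the constraints $-d_i<v_i\le a_i$ force $0\le v_i\le a_i$, and the conditions (\ref{eq1}) specialize to $v_i-a_i\le v_j<v_i+a_j$ for $i<j$. Of these only $v_j<v_i+a_j$ can fail inside the box, and it fails exactly when $v_i=0$ and $v_j=a_j$. Hence
\[
\mathcal{B}=\Big\{v\in\prod_{i=1}^n\{0,1,\ldots,a_i\}:\text{no pair }i<j\text{ satisfies }v_i=0,\ v_j=a_j\Big\}.
\]
Since $\Delta(\bar F)$ is the simplex with vertices $a_1e_1,\ldots,a_ne_n,-\sum_{i=1}^n e_i$, writing $v=\sum_i\alpha_i(a_ie_i)+\alpha_{n+1}(-\sum_j e_j)$ and minimizing $\sum_i\alpha_i$ subject to $\alpha_i\ge 0$ yields $w(v)=\sum_i v_i/a_i+\beta(1+\sum_i 1/a_i)$ with $\beta=\max(0,-\min_i v_i)$. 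For $v\in\mathcal{B}$ we have $v_i\ge 0$, so $\beta=0$ and $w(v)=\sum_{i=1}^n v_i/a_i$. The case $n=1$ reduces immediately with $\mathcal{B}=\{0,1,\ldots,a_1\}$ and the same weight formula.

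The core of the argument is then the combinatorial bijection
\[
\Psi:\mathcal{B}\longrightarrow\Big\{\sum_{i=1}^n\frac{u_i}{a_i}:0\le u_i\le a_i\Big\},\qquad v\mapsto\sum_{i=1}^n\frac{v_i}{a_i}.
\]
Injectivity uses coprimality: if $\sum v_i/a_i=\sum v_i'/a_i$, clearing $\prod a_k$ and reducing modulo each $a_k$ (using $\gcd(a_k,\prod_{j\neq k}a_j)=1$) gives $v_k\equiv v_k'\pmod{a_k}$, hence either $v_k=v_k'$ or $\{v_k,v_k'\}=\{0,a_k\}$. Setting $I_{+}=\{k:v_k=a_k,\,v_k'=0\}$ and $I_{-}=\{k:v_k=0,\,v_k'=a_k\}$, the sum equation forces $|I_{+}|=|I_{-}|$; if these are both nonempty, then either $\min I_{-}<\max I_{+}$ (yielding a forbidden pair in $v$) or the opposite inequality holds (yielding one in $v'$), contradicting membership in $\mathcal{B}$. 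For surjectivity, given $u\in\prod_i\{0,\ldots,a_i\}\setminus\mathcal{B}$, let $J_0=\{k:u_k=0\}$, $J_+=\{k:u_k=a_k\}$, let $i_0=\min J_0,\ j_0=\max J_+$ (so $i_0<j_0$ by the violation), and swap $(u_{i_0},u_{j_0})=(0,a_{j_0})\mapsto(a_{i_0},0)$. The swap preserves $\sum u_k/a_k$, and the bounded integer monovariant $\sum_{k\in J_0}k-\sum_{k\in J_+}k$ strictly increases by $2(j_0-i_0)>0$, so iteration terminates at an element of $\mathcal{B}$ having the same weight as $u$.

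The main obstacle is this surjectivity step: one must exhibit an explicit reduction procedure that actually lands in $\mathcal{B}$ without cycling, and the monovariant above resolves this. Once $\Psi$ is established as a bijection, every $v\in\mathcal{B}$ contributes a distinct slope $w(v)=\sum_i v_i/a_i$, and combining Corollary \ref{thm2} with Theorem \ref{thm9} shows that the slope multiset of $L(\bar F,T)^{(-1)^{n-1}}$ coincides with $\{\sum_{i=1}^n u_i/a_i:0\le u_i\le a_i\}$, each slope occurring with multiplicity one, as claimed.
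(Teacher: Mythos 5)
Your proposal is correct and follows essentially the same route as the paper: invoke Theorem \ref{thm9} to reduce to the Hodge polygon, identify $\mathcal{B}$ as the box $\prod_i\{0,\dots,a_i\}$ minus the tuples with a pair $(v_i,v_j)=(0,a_j)$, $i<j$, compute $w(v)=\sum_i v_i/a_i$, and use the pairwise coprimality of the $a_i$ to show distinct elements of $\mathcal{B}$ have distinct weights. Your phrasing of injectivity via the sets $I_{+},I_{-}$ is a harmless variant of the paper's ``staircase'' enumeration, and your explicit swap-and-monovariant argument for surjectivity onto $\{\sum_i u_i/a_i\}$ is a welcome detail that the paper asserts without proof.
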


This paper is organized as follows. In section 2, we review Dwork's cohomology theory.
In section 3, we first prove Theorem \ref{thm10}, from which Corollary \ref{thm2} follows. Then by reviewing Wan's decomposition theorem and diagonal local theory we prove Theorem \ref{thm9} and Theorem \ref{thm1}.

\section{Dwork cohomology}

In this section, we give a brief review on $p$-adic cohomology theory of Dwork type, for more details, see \cite{AS89} or \cite{HS17}.
Let $\bar{F}(\bar{\lambda},x)$ be given by (1.1).
Recall that $\Delta(\bar{F})$ is the convex closure of $Supp(\bar{F})$ and the origin.
Note that $p\nmid a_id_i$ for $i\in\{1,\cdots,n\}$.
Then $\bar{F}$ is nondegenerate with respect to $\Delta(\bar{F})$.
Let $Cone(\bar{F})$ be the cone in $\mathbb{R}^n$ over $\Delta(\bar{F})$ and let
$M(\bar{F}):=Cone(\bar{F})\cap \mathbb{Z}^n$. Notably $M(\bar{F})=\mathbb{Z}^n$.
For a point $v\in \mathbb{R}^n$, the {\it weight} $w(v)$ is defined to be
the smallest nonnegative real number $c$ such that $v\in c\cdot \Delta(\bar{F})$. If there is no such $c$, we then define
$w(v)=\infty$. There is a positive integer $e$ such that $w(\mathbb{Z}^n)\subset (1/e)\mathbb{Z}_{\ge 0}$. The weight function $w(v)$ has the following properties:

(a) $w(v)=\inf\big\{\sum_{u\in Supp(\bar{F})} \alpha_u : \sum_{u\in Supp(\bar{F})} \alpha_uu=v,\alpha_u\in\mathbb{Q}, \alpha_u\ge 0\big\}$;

(b) $w(lv)=lw(v)$ for $l\in\mathbb{Z}_{> 0}$;

(c) $w(v+v')\le w(v)+w(v')$, with equality holding if and only if $v$ and $v'$ are cofacial, i.e., $v/w(v)$ and $v'/w(v')$ lie on the same closed face of $\Delta(\bar{F})$.

Let $R:=\mathbb{F}_q[x_1^{\pm},\cdots ,x_n^{\pm}]$.
One can define an increasing filtration on $R$ by setting, for $i\in \mathbb{Z}_{>0}$,
$$R_{i/e}:=\Big\{\sum_{v}b_vx^v: w(v)\le i/e\ {\rm for\ all}\ v\ {\rm with}\ b_v\neq 0 \Big\}.$$
Let $\bar{R}$ denote the associated graded ring.
Multiplication in $\bar{R}$ obeys the rule
\begin{align}\label{eqn2}
x^u x^{u'}=
&\left\{
  \begin{array}{ll}
x^{u+u'},&  \hbox{if $u$ and $u'$ are cofacial};\\
   0,& \hbox{otherwise}.\\
  \end{array}
\right.
\end{align}

We can construct two complexes as follows.
The spaces in both complexes are the same
\begin{equation}\label{eq0}
\Omega^{i}(\bar{R},\nabla(\bar{F})):=\Omega^{i}(\bar{R},\nabla(\bar{D})):=\bigoplus_{1\le j_1<...<j_i\le n} \bar{R}\frac{dx_{j_1}}{x_{j_1}}\wedge...\wedge \frac{dx_{j_i}}{x_{j_i}}
\end{equation}
with respective boundary operators given by
$$\nabla(\bar{F})\Big( \zeta \frac{dx_{j_1}}{x_{j_1}}\wedge...\wedge \frac{dx_{j_i}}{x_{j_i}}\Big):=\Big(\sum_{l=1}^nx_l\frac{ \partial \bar{F}}{\partial x_l}\zeta \frac{dx_{l}}{x_{l}}\Big)\wedge\frac{dx_{j_1}}{x_{j_1}}\wedge...\wedge \frac{dx_{j_i}}{x_{j_i}}$$
and
$$\nabla(\bar{D})\Big( \zeta \frac{dx_{j_1}}{x_{j_1}}\wedge...\wedge \frac{dx_{j_i}}{x_{j_i}}\Big):=\Big(\sum_{l=1}^n\bar{D}_{l}(\zeta ) \frac{dx_{l}}{x_{l}}\Big)\wedge\frac{dx_{j_1}}{x_{j_1}}\wedge ...\wedge\frac{dx_{j_i}}{x_{j_i}},$$
where $$\bar{D}_l:=x_l\frac{\partial}{\partial x_l}+x_l\frac{ \partial \bar{F}}{\partial x_l}. $$

Since $\bar{F}(\bar{\lambda},x)$ is nondegenerate with respect to $\Delta(\bar{F})$, we have
\begin{thm}[Theorem 2.2, \cite{HS17}]\label{thm0}
The two complexes $\Omega^{\bullet}(\bar{R},\nabla(\bar{F}))$
and $\Omega^{\bullet}(\bar{R}, \nabla(\bar{D}))$
are acyclic except in the top dimension $n$. In both cases, the top dimensional cohomology $H^n$ is a finite free $\mathbb{F}_q$-algebra
of rank $n!Vol(\Delta(\bar{F})$.
For each $i\in (1/e)\mathbb{Z}_{\ge 0}$, we may choose a monomial basis $B_i$
consisting of monomials of weight $i$ for an $\mathbb{F}_q$-vector space $V_i$
such that the $i$-th graded piece $\bar{R}_i$ of $\bar{R}$ may be written as
$$\bar{R}_i=V_i\oplus \sum_{l=1}^nx_l\frac{\partial \bar{F}}{\partial x_l}\bar{R}_{i-1}$$
so that if $B=\cup_{i\in (1/e)\mathbb{Z}_{\ge 0}}B_i$
and $V=\sum_{i\in (1/e)\mathbb{Z}_{\ge 0}}V_i$ is the $\mathbb{F}_q$-vector space with
basis $B$,
then
$$\bar{R}=V\oplus \sum_{l=1}^nx_l \frac{\partial \bar{F}}{\partial x_l}\bar{R} $$
and
 $$\bar{R}=V\oplus \sum_{l=1}^n\bar{D}_l\bar{R}.  $$
\end{thm}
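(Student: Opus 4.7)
The plan is to recognize that both complexes compute, up to a filtered-to-graded comparison, the Koszul cohomology of the multiplication operators $x_l\partial\bar{F}/\partial x_l$ on the graded ring $\bar{R}$. First I would examine $\Omega^{\bullet}(\bar{R},\nabla(\bar{F}))$: by the multiplication rule (\ref{eqn2}), $\bar{R}$ splits over the face poset of $\Delta(\bar{F})$, and the Koszul differentials split accordingly as a direct sum indexed by closed faces $\delta$ of $\Delta(\bar{F})$. On the piece supported in the cone over $\delta$, the boundary is just multiplication by the face restrictions $x_l\partial\bar{F}^{\delta}/\partial x_l$.

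Next I would invoke the nondegeneracy hypothesis: for each closed face $\delta$ not containing the origin, the $n$ partials $x_l\partial\bar{F}^{\delta}/\partial x_l$ have no common toric zero, which, via a Hilbert-series argument in the style of Kouchnirenko and Adolphson--Sperber, forces them to be a regular sequence in the semigroup algebra of the cone over $\delta$. Koszul regularity then yields acyclicity except at the top of each face piece, and the Euler-characteristic bookkeeping recovers the total rank $\dim H^{n}=n!Vol(\Delta(\bar{F}))$.

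To produce the monomial basis, I would induct on the weight $i\in (1/e)\mathbb{Z}_{\ge 0}$: at weight $i$ pick monomials whose classes form an $\mathbb{F}_q$-basis of the quotient $\bar{R}_i/\sum_{l=1}^{n}x_l(\partial\bar{F}/\partial x_l)\bar{R}_{i-1}$, and let $V_i$ be their $\mathbb{F}_q$-span. This gives the first displayed decomposition weight by weight, and since each graded piece is finite-dimensional, summing over $i$ assembles the global splitting $\bar{R}=V\oplus\sum_{l}x_l(\partial\bar{F}/\partial x_l)\bar{R}$.

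Finally, to transfer from $\nabla(\bar{F})$ to $\nabla(\bar{D})$, I would observe that $\bar{D}_l$ and multiplication by $x_l\partial\bar{F}/\partial x_l$ share the same symbol on the associated graded ring: their difference $x_l\partial/\partial x_l$ preserves weight while the multiplication operator strictly raises it. A Nakayama-type induction weight by weight then propagates the decomposition, giving $\bar{R}=V\oplus\sum_{l}\bar{D}_l\bar{R}$ with the same complement $V$. The main obstacle is Step 2: establishing Koszul regularity of $\{x_l\partial\bar{F}^{\delta}/\partial x_l\}$ in the semigroup algebra of each face cone demands a careful Hilbert-series computation, and this is precisely where nondegeneracy enters and where the volume $n!Vol(\Delta(\bar{F}))$ emerges from the geometry of $\Delta(\bar{F})$.
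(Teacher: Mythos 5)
Your outline reproduces, essentially step for step, the proof of the source that the paper itself merely cites for this statement (Theorem 2.2 of \cite{HS17}, resting on \cite{AS89} and Kouchnirenko): the face-by-face analysis of $\bar{R}$ under the cofacial multiplication rule, nondegeneracy forcing $\{x_l\partial\bar{F}/\partial x_l\}$ to be a regular sequence via the Hilbert-series/volume computation, Koszul acyclicity with top cohomology of rank $n!Vol(\Delta(\bar{F}))$, the weight-by-weight choice of monomial complements $V_i$, and the filtered-to-graded (``successive approximation'') transfer from the multiplication operators to the $\bar{D}_l$. This is the same approach as the cited proof, and the one point you flag as the main obstacle (regularity on each face cone) is indeed where all the work lies.
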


Let
$$H_{\Delta}(i):=\dim_{\mathbb{F}_q}V_i. $$
We now give the definition of the Hodge polygon of $\Delta(\bar{F})$.
\begin{defn}
The {\it Hodge polygon} $HP(\Delta(\bar{F}))$ of $\Delta(\bar{F})$
is the lower convex polygon in $\mathbb{R}^2$ with vertices (0,0) and
$$\Big(\sum_{k=0}^{m}H_{\Delta}(k),\sum_{k=0}^{m}\frac{k}{e}H_{\Delta}(k) \Big), m=0,1,...,ne.$$
\end{defn}

Let $\mathbb{Q}_q$ be the unramified extension of $\mathbb{Q}_p$ of degree $a$,
and $\mathbb{Z}_q$ be its ring of integers.
Let $\zeta_p$ be a primitive $p$-th root of unity. Then $\mathbb{Z}_q[\zeta_p]$ and $\mathbb{Z}_p[\zeta_p]$
are rings of integers of $\mathbb{Q}_q(\zeta_p)$ and $\mathbb{Q}_p(\zeta_p)$, respectively.
Let $\pi$ be an element in an algebraic closure of $\mathbb{Q}_p$ such that $\pi^{p-1}=-p$.
By Krasner's lemma, we have $ \mathbb{Q}_p(\pi)=\mathbb{Q}_p(\zeta_p)$.
Adjoining the $e$-th root of $\pi$ in $\Omega$, say $\tilde{\pi}$,
we obtain totally ramified extensions of $\mathbb{Q}_q(\zeta_p)$ and
$\mathbb{Q}_p(\zeta_p)$, which are denoted by $K$ and $K_0$, respectively. Let $\mathbb{Z}_q[\tilde{\pi}]$ and
$\mathbb{Z}_p[\tilde{\pi}]$ denote the respective rings of integers of $K$ and $K_0$.
Let $\lambda$ be the Teichm$\ddot{\rm {u}}$ller representative of $\bar{\lambda}$ and $F(\lambda,x)$ be the Teichm${\rm \ddot{u}}$ller lifting of $\bar{F}(\bar{\lambda},x)$. Define
$$\mathcal{C}_0:=\{\sum_{u\in\mathbb{Z}^n}c_u\tilde{\pi}^{ew(u)}x^u: c_u\in \mathbb{Z}_q[\tilde{\pi}],
c_u\rightarrow 0 {\rm\ as\ } u\rightarrow \infty \}.$$
Let $\theta(t):=\exp \pi(t-t^p)$. If we write
$\theta(t)=\sum_{i=0}^{\infty}\lambda_it^i$, it then follows from \cite{[Dwo62]} that
\begin{equation}\label{eq00}
{\rm ord}_p \lambda_i\ge \frac{p-1}{p^2}\cdot i
\end{equation}
for every $i\ge 0$.
Let $$F_0(x)=\theta(\lambda\prod_{i=1}^n x_i^{-d_i}) \prod_{i=1}^n \theta(x_i^{a_i})$$ and
$$F(x)=\prod_{j=0}^{a-1} F_0^{\sigma^j}(x^{p^j}),$$
where $\sigma\in Gal(K/K_0)$ is the Frobenius automorphism of $Gal(\mathbb{Q}_q/\mathbb{Q}_p)$
extended to $K$ by requiring $\sigma(\tilde{\pi})=\tilde{\pi}$ and $\sigma(\zeta_p)=\zeta_p$.


We may define the Koszul complex $\Omega^{\bullet}(\mathcal{C}_0,\nabla(D))$ by letting
\begin{equation}\label{eq3}
\Omega^{i}(\mathcal{C}_0,\nabla(D)):=\bigoplus_{1\le j_1<...<j_i\le n} \mathcal{C}_0 \frac{d x_{j_1}}{x_{j_1}}\wedge \cdots\wedge\frac{d x_{j_i}}{x_{j_i}}
\end{equation}
with boundary map
$$\nabla(D)(\zeta \frac{d x_{j_1}}{x_{j_1}}\wedge\cdots\wedge \frac{d x_{j_i}}{x_{j_i}})=\Big(\sum_{l=1}^nD_{l}(\zeta)\frac{d x_{l}}{x_{l}}\Big)\wedge \frac{d x_{j_1}}{x_{j_1}}\wedge\cdots\wedge\frac{d x_{j_i}}{x_{j_i}},$$
where $$D_{l}=x_l\frac{\partial}{\partial x_l}+x_l\frac{\pi\partial F(\lambda,x)}{\partial x_l}.$$


Set $$\alpha_0:=\sigma^{-1}\circ \psi\circ F_0$$
 and
$$\alpha:=\psi^a\circ F,$$
where $\psi$ is defined as
$$\psi(\sum A_vx^v)=\sum A_{pv}x^v.$$
Then by estimate (\ref{eq00}) and $p>2$ we conclude that $\alpha_0$ is an $\mathbb{Z}_q[\tilde{\pi}]$-semilinear morphism acting on $\mathcal{C}_0$,
and $\alpha$ maps $\mathcal{C}_0$
to $\mathcal{C}_{0}$ linearly over $\mathbb{Z}_q[\tilde{\pi}]$.
It follows from Serre \cite{Se} that the operators $\alpha^i$ and $\alpha_0^i$ acting on $\mathcal{C}_0$
have well-defined traces.

By (\cite{[Dwo62]}, equation(4.35))
the following communication law holds
\begin{equation}\label{eq21}
q D_{l}\circ \alpha=\alpha\circ D_{l}
\end{equation}
for $l=1,...,n$. Define an endomorphism $Frob^i$ on $\Omega^{i}(\mathcal{C}_0,\nabla(D))$ by
\begin{equation}\label{eq14}
Frob^i:= \bigoplus _{1\le j_1<...<j_i\le n} q^{n-i}\alpha \frac{d x_{j_1}}{x_{j_1}}\wedge\cdots\wedge\frac{d x_{j_i}}{x_{j_i}}.
\end{equation}
The commutation rule (\ref{eq21}) ensures that (\ref{eq14}) defines a chain map on the Koszul complex $\Omega^{\bullet}(\mathcal{C}_{0},\nabla (D))$.

 Then by the Dwork's trace formula, we have
\begin{align*}
S_k(\bar{F},\bar{\lambda})=\sum_{i=0}^n(-1)^iTr(H^i(Frob)^k\ |\ H^i(\mathcal{C}_{0}, \nabla (D))).
\end{align*}

Using the same argument to Theorem \ref{thm0} as \cite{HS17}, \cite{HS14}, or going back to \cite{AS89} or \cite{[PM70]}, we have the following result.
\begin{prop}\label{prop1}
 The cohomology of $\Omega^{\bullet}(\mathcal{C}_{0}, \nabla(D))$ is acyclic except in top dimension $n$, and $H^n(\Omega^{\bullet}(\mathcal{C}_{0}, \nabla(D))) $ is
a free $\mathbb{Z}_q[\tilde{\pi}]$-module of rank equal to $n!Vol(\Delta(\bar{F}))$. Furthermore,
 $$\mathcal{C}_{0}=\sum_{v\in B}\mathbb{Z}_q[\tilde{\pi}]\tilde{\pi}^{ew(v)}x^v\oplus \sum_{l=1}^n D_{l}\mathcal{C}_{0},$$
 where $B$ is defined as Theorem \ref{thm0}.
\end{prop}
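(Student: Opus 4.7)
The plan is to lift Theorem~\ref{thm0} to the Banach module $\mathcal{C}_0$ by $\tilde{\pi}$-adic successive approximation, using a weight filtration on $\mathcal{C}_0$ together with Nakayama-type arguments, exactly as worked out in \cite{AS89}, \cite{HS14}, \cite{HS17}, and \cite{[PM70]}.

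First I would equip $\mathcal{C}_0$ with the decreasing weight filtration $\mathcal{C}_0^{\geq k/e}$ in which a term $c_u\tilde{\pi}^{ew(u)}x^u$ lies at weight $w(u) + \mathrm{ord}_{\tilde{\pi}}(c_u)/e$, so that $\mathcal{C}_0$ is complete and separated for the filtration and its associated graded is controlled by the graded ring $\bar{R}$ of Theorem~\ref{thm0}. Under the rescaling $\pi = \tilde{\pi}^e$, the second summand $x_l\pi\,\partial F/\partial x_l$ of $D_l$ relates at the graded level to $x_l\partial \bar{F}/\partial x_l$, so that the graded boundary of $\Omega^{\bullet}(\mathcal{C}_0,\nabla(D))$ is identified with that of $\Omega^{\bullet}(\bar{R},\nabla(\bar{D}))$. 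The growth estimate (\ref{eq00}) on $\theta$ is what ensures that the higher $p$-power contributions in $F$ lie strictly deeper in the filtration.

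Next I would prove the decomposition $\mathcal{C}_0 = \sum_{v\in B}\mathbb{Z}_q[\tilde{\pi}]\tilde{\pi}^{ew(v)}x^v + \sum_l D_l\mathcal{C}_0$ by induction on weight: take $\zeta\in\mathcal{C}_0$, extract its leading-weight symbol $\bar{\zeta}\in\bar{R}$, use Theorem~\ref{thm0} to write $\bar{\zeta} = \bar{v} + \sum_l \bar{D}_l\bar{\eta}_l$ with $\bar{v}$ in the $\mathbb{F}_q$-span of $B$, lift $\bar{v},\bar{\eta}_l$ to elements $v,\eta_l\in\mathcal{C}_0$, and replace $\zeta$ by $\zeta - v - \sum_l D_l\eta_l$. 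The symbol identification above guarantees the remainder lies strictly deeper in the filtration, and $\tilde{\pi}$-adic completeness of $\mathcal{C}_0$ closes the iteration. For the directness of the sum, a relation $\sum_{v\in B}c_v\tilde{\pi}^{ew(v)}x^v = \sum_l D_l\eta_l$ with some $c_v$ a unit would contradict the direct-sum statement in Theorem~\ref{thm0} at the minimum weight; hence every $c_v\in\tilde{\pi}\mathbb{Z}_q[\tilde{\pi}]$, and iteration forces them all to vanish. This gives freeness of $H^n$ with rank $|B| = n!\,\mathrm{Vol}(\Delta(\bar{F}))$.

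Acyclicity of the full complex in degrees $i<n$ is proved by the same procedure: a cocycle reduces at leading weight to a cocycle in $\Omega^{\bullet}(\bar{R},\nabla(\bar{D}))$, which by Theorem~\ref{thm0} is a coboundary; lifting a primitive and subtracting strictly deepens the weight, so iteration produces a primitive in $\mathcal{C}_0$. I expect the main technical obstacle to be the precise identification of the graded complex under the rescaling $\pi = \tilde{\pi}^e$, together with verifying the requisite $\tilde{\pi}$-adic convergence --- exactly the bookkeeping carried out in \cite{AS89}, \cite{HS14}, \cite{HS17}, and \cite{[PM70]}. Our role is simply to check that their general framework specializes correctly to the Laurent polynomial $\bar{F}$ considered here.
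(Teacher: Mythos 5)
Your proposal is correct and matches the paper's approach: the paper gives no details for Proposition~\ref{prop1}, simply invoking the standard lifting argument of \cite{AS89}, \cite{HS14}, \cite{HS17} and \cite{[PM70]}, which is exactly the weight-filtration/successive-approximation/Nakayama scheme you describe. One small clarification: the operator $D_l$ involves only the Teichm\"uller lift $F(\lambda,x)$ (a Laurent polynomial supported on $Supp(\bar{F})$, each term of weight $1$), not the splitting function $F(x)$, so there are no higher $p$-power contributions to control in the boundary map --- the estimate (\ref{eq00}) is instead what makes the Frobenius $\alpha$ stabilize $\mathcal{C}_0$.
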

It follows that
$$S_k(\bar{F},\bar{\lambda})=(-1)^n Tr\big(H^n (Frob)^k| H^n(\mathcal{C}_{0}, \nabla (D))\big).$$
Hence
$$L(\bar{F}(\bar{\lambda},x), T)^{(-1)^{n-1}}=\det (1-FrobT|H^n(\mathcal{C}_{0}, \nabla (D))).$$

It follows from \cite{AS89} that the Newton polygon of $L(\bar{F}(\bar{\lambda},x), T)^{(-1)^{n-1}}$
lies over the Newton polygon (using $ord_{q}$) of
\begin{equation}\label{eq27}
\prod_{v\in B}(1-q^{w(v)}T).
\end{equation}
 In other words,
$$NP(\bar{F})\ge HP(\Delta(\bar{F})).$$
In particular, the polynomial $\bar{F}$ is called ordinary if $NP(\bar{F})= HP(\Delta(\bar{F}))$.

\section{Lower bound for Newton polygon}

\subsection{Basis}

For $n=1$, we let $\Delta_0=\{a_1\}$ and
$\Delta_1=\{-d_1\}$.
For $n\ge 2$, we define the following notations.
For each $j=1,...,n$, by $A_j\in \mathbb{Z}^n$ we denote the vector such that the $j$-th component is $a_j$ and the other components are 0.
Let $\gamma=(-d_1,\cdots,-d_n)$. Then
$$supp(\bar{F})=\{\gamma\}\cup\{A_j\}_{j=1}^n.$$
Let $\Delta_0$ denote the convex closure generated by the lattice points $A_j(1\le j\le n)$.
For $i=1,...,n$, let $\Delta_i$ be the convex closure generated by $A_j(1\le j\neq i\le n)$ and $\gamma$. We define
$$C(\Delta_0):=\{u\in \mathbb{Z}^n: u_i\ge 0\ {\rm for}\ i=1,...,n\}.$$
For $i=1,\cdots,n$, we let
 $$C(\Delta_i):=\{u\in \mathbb{Z}^n: u_i\le 0\ {\rm and}\ u_j-\frac{d_j}{d_i}u_i\ge 0 {\rm\ for\ any}\ 1\le j\neq i\le n\}.$$
 From the definition, we can see that if there exists an integer $i$ such that $u_i=0$ or there is at least one
pair of integers $1\le i\neq j\le n$
such that $u_{j}=\frac{d_{j}}{d_{i}}u_{i}$ and $u_i<0$, then $u$ is cofacial to all elements in $Supp(\bar{F})$ with respect to $\Delta(\bar{F})$.

Recall that for $n=1$, $\mathcal{B}=\{v\in \mathbb{Z}: -d_1<v\le a_1\} $.
For $n\ge 2$, the set $\mathcal{B}=\{(v_1,\cdots, v_n)\in \mathbb{Z}^n\}$ such that
$-d_i<v_i\le a_i$ for all $i=1,\cdots,n$, and $(v_i,v_j)$ satisfies that
\begin{align*}
 \frac{d_j}{d_i}(v_i-a_i)\le v_j< \frac{d_j}{d_i}v_i+a_j
\end{align*}
for $1\le i<j\le n$. And $\bar{B}=\{x^v:v\in \mathcal{B}\}$.

\begin{lem}\label{lemn2} Let $v:=(v_1,...,v_n)\in \mathbb{Z}^n$ such that $-d_i<v_i\le a_i$ for $i=1,...,n$.
Then there exists $u=(u_1,...,u_n)\in \mathcal{B}$ and $c\in \mathbb{F}_q$ such that
$$x^v\equiv cx^u \mod \sum_{l=1}^n\bar{F}_l\bar{R}.$$
\end{lem}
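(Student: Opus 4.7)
The plan is to induct on a potential $\Phi(v)$ that counts the number of pairs $(i,j)$ with $1\le i<j\le n$ at which $v$ violates (\ref{eq1}); by construction $v\in\mathcal{B}$ precisely when $v$ lies in the box $-d_i<v_i\le a_i$ and $\Phi(v)=0$. The basic ingredient is the exchange relation supplied by $\bar{F}_l=a_l x^{A_l}-d_l\bar{\lambda}\,x^{\gamma}$: for any $x^w$ cofacial with both $A_l$ and $\gamma$, we have, modulo $\sum_{l=1}^n\bar{F}_l\bar{R}$, the congruence
\[
a_l\, x^{A_l+w} \equiv d_l \bar{\lambda}\, x^{\gamma+w}.
\]
Since $w=0$ is always cofacial (the identity $1\in\bar{R}$ multiplies freely), this yields in particular the unconditional identity $x^{A_l}\equiv (d_l\bar{\lambda}/a_l)\,x^{\gamma}$, and chaining two of these gives $x^{A_j}\equiv \frac{d_j a_i}{d_i a_j}\,x^{A_i}$ for all $i,j$.

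For the inductive step, suppose $v$ lies in the box with $\Phi(v)\ge 1$. I will exhibit $v'$ in the box and $c'\in\mathbb{F}_q$ with $\Phi(v')<\Phi(v)$ and $x^v\equiv c'\,x^{v'}\mod \sum_{l=1}^n\bar{F}_l\bar{R}$; the inductive hypothesis applied to $v'$ then finishes the proof. Consider first the representative case $v\in C(\Delta_0)$, that is, $v_k\ge 0$ for all $k$. Here (\ref{eq1}) can only fail at pairs $(i,j)$ with $i<j$, $v_i=0$, and $v_j=a_j$; pick such a pair. Let $v':=v-A_j+A_i$, which has $v'_i=a_i$, $v'_j=0$, and so remains in the box. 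Since $v-A_j$ has vanishing $i$-th and $j$-th coordinates (the other coordinates being in $[0,a_k]$), $x^{v-A_j}$ is cofacial with each of $A_i$ and $A_j$, so multiplying the identity $x^{A_j}\equiv \frac{d_j a_i}{d_i a_j}\,x^{A_i}$ by $x^{v-A_j}$ delivers the desired congruence. A short combinatorial bookkeeping, in which each newly created violation $(p,i)$ with $p<i$, $v_p=0$ (respectively $(j,q)$ with $q>j$, $v_q=a_q$) is bijectively paired with a pre-existing violation $(p,j)$ (respectively $(i,q)$), shows that $\Phi(v')\le\Phi(v)-1$.

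The main obstacle is to carry out the analogous step when $v$ lies in one of the other cones $C(\Delta_l)$, $l\ge 1$. There $v$ has a negative coordinate, (\ref{eq1}) can fail in several qualitatively different ways (upper bound vs.\ lower bound, and interaction with the distinguished index $l$), and a single elementary exchange $v\mapsto v-A_l+\gamma$ typically takes $v$ out of the box. The remedy is structurally the same: one builds a \emph{compound} exchange of the form $v\mapsto v+\sum_k c_k(A_k-\gamma)$ with small integer coefficients tailored to the violating pair, yielding a new $v'$ that both remains in the box and has strictly smaller potential. The underlying geometric point is that the cones $C(\Delta_0),C(\Delta_1),\ldots,C(\Delta_n)$ form a fan under $\Delta(\bar{F})$ whose common faces are spanned by subsets of $\{A_1,\ldots,A_n,\gamma\}$, so whenever a coordinate of an intermediate vector vanishes (or another cofaciality-inducing relation holds), the elementary exchange at the relevant step is valid.
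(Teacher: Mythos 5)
Your overall strategy coincides with the paper's: induct on the number of pairs $(i,j)$ violating (\ref{eq1}), and use the relations $\bar{F}_l\cdot x^w=a_lx^{A_l}x^w-d_l\bar{\lambda}x^{\gamma}x^w$ to trade one monomial in the box for another with fewer violations. Your treatment of the case $v\in C(\Delta_0)$ is correct, including the pairing argument showing the potential drops by at least one (there the violations are exactly the pairs $(i,j)$ with $v_i=0$, $v_j=a_j$, and the exchange $v\mapsto v-A_j+A_i$ with the cancellation bookkeeping works as you describe).

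However, there is a genuine gap where you write that for $v$ in the other cones ``the remedy is structurally the same'' and that a compound exchange $v\mapsto v+\sum_k c_k(A_k-\gamma)$ ``yields a new $v'$ that both remains in the box and has strictly smaller potential.'' This is precisely the hard content of the lemma, and it is not a routine extension of the $C(\Delta_0)$ case, for three reasons. First, when coordinates are negative the violating pair must be chosen carefully: the paper selects $(i_0,j_0)$ by an extremal condition (maximizing $\frac{v_j-a_j}{d_j}$ over the relevant $j$ and minimizing $\frac{v_i}{d_i}$ over the relevant $i$), and the verification that the exchange $v\mapsto v-A_{j_0}+A_{i_0}$ creates no new violations at other pairs involving $i_0$ or $j_0$ is a page of case-by-case contradictions that rely essentially on this extremal choice; an arbitrary violating pair does not work. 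Second, even with the right choice, a single exchange need \emph{not} strictly decrease the potential; the paper only gets $|M_{v^{(1)}}|\le|M_v|$ and must iterate, proving termination by a secondary monotonicity ($\frac{v^{(1)}_{j_0^{(1)}}-a_{j_0^{(1)}}}{d_{j_0^{(1)}}}\le\frac{v_{j_0}-a_{j_0}}{d_{j_0}}$ and $\frac{v^{(1)}_{i_0^{(1)}}}{d_{i_0^{(1)}}}\ge\frac{v_{i_0}}{d_{i_0}}$), so your claimed strict decrease after one step is false as stated. Third, in the cones with a negative coordinate the intermediate monomial $x^{v-A_{j_0}}$ may fail to be cofacial with $A_{i_0}$, in which case the product in $\bar{R}$ vanishes and one gets $x^v\equiv 0$ rather than an exchange; your sketch never addresses this degenerate outcome, and your ``unconditional identity'' $x^{A_l}\equiv(d_l\bar{\lambda}/a_l)x^{\gamma}$ does not circumvent it, since multiplying it by $x^{v-A_{j_0}}$ still requires evaluating products under the cofaciality rule (\ref{eqn2}). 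Until the compound exchange is actually constructed and these three points are verified, the inductive step is unproven for the bulk of the box.
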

\begin{proof}
Evidently, Lemma \ref{lemn2} is true for $n=1$. In what follows, we let $n\ge 2$.
If $v\in \mathcal{B}$, then Lemma \ref{lemn2} is true. Hence we let $v\notin \mathcal{B}$.
Then there is at least one pair of integers $(i,j)$
such that (\ref{eq1}) does not hold.
Define the set $M_v$ as follows:
$$M_v:=\Big\{(i,j): 1\le i<j\le n\ {\rm\ such\ that}\ v_j\ge\frac{d_j}{d_i}v_i+a_j\ {\rm or}\ v_j<\frac{d_j}{d_i}(v_i-a_i) \Big\}.$$

We claim that for each positive integer $k$, if $|M_v|=k$, then there are $c\in \mathbb{F}_q$ and $v'\in \mathbb{Z}^n$ such that
$$x^v\equiv cx^{v'}\mod \sum_{l=1}^n\bar{F}_l\bar{R},$$
$-d_i<v'_{i}\le a_i$ for $i=1,...,n$ and $|M_{v'}|\le k-1$.
In particular, if $|M_v|=1$, then $v'\in \mathcal{B}$.

Suppose $M_v=\{(i_1,j_1),...,(i_k,j_k)\}$. Let
$$M_1=\{(i,j)\in M_v: v_j\ge \frac{d_{j}}{d_{i}}v_{i}+a_{j}\}$$
and
$$M_2=\{(i,j)\in M_v: v_j< \frac{d_{j}}{d_{i}}(v_{i}-a_{i})\}.$$
Let $$J=\{j: (i,j)\in M_1\ {\rm or}\ (j,i)\in M_2\}.$$
Let $j_0$ be the largest integer in $J$ such that
$$\frac{v_{j_0}-a_{j_0}}{d_{j_0}}=\max\{ \frac{v_{j}-a_{j}}{d_{j}}: j\in J\} .$$
Let $$I=\{i: (i,j_0)\in M_1\ {\rm or}\ (j_0,i)\in M_2\}$$
and $i_0$ be the least integer of $I$ such that
$$\frac{v_{i_0}}{d_{i_0}}=\min \big\{\frac{v_{i}}{d_{i}}: i\in I \big\} .$$
Clearly, one has $(i_0,j_0)\in M_1$ or $(j_0,i_0)\in M_2$. Consider the following two cases.

{\sc Case 1.} $(i_0,j_0)\in M_1$. That is, $v_{j_0}\ge \frac{d_{j_0}}{d_{i_0}}v_{i_0}+a_{j_0}$.
Then $v_{i_0}\le 0$. If $v-A_{j_0}$ is not cofacial to $A_{i_0}$,
then $$\bar{F}_{j_0}\cdot x^{v-A_{j_0}}=a_{j_0}x^v-d_{j_0}\bar{\lambda}x^{v-A_{j_0}-\gamma}$$
and
$$\bar{F}_{i_0}\cdot x^{v-A_{j_0}}=0-d_{i_0}\bar{\lambda}x^{v-A_{j_0}-\gamma}.$$
Hence
$$d_{i_0}a_{j_0}x^v\equiv0\mod \sum_{l=1}^n\bar{F}_l\bar{R}.$$

If $v-A_{j_0}$ is cofacial to $A_{i_0}$, then
$$\bar{F}_{j_0}\cdot x^{v-A_{j_0}}=a_{j_0}x^v-d_{j_0}\bar{\lambda}x^{v-A_{j_0}-\gamma}$$
and
$$\bar{F}_{i_0}\cdot x^{v-A_{j_0}}=a_{i_0}x^{v-A_{j_0}+A_{i_0}}-d_{i_0}\bar{\lambda}x^{v-A_{j_0}-\gamma}.$$
Hence
$$d_{i_0}a_{j_0}x^v\equiv d_{j_0}a_{i_0}x^{v-A_{j_0}+A_{i_0}}\mod \sum_{l=1}^n\bar{F}_l\bar{R}.$$
Let $v^{(1)}=v-A_{j_0}+A_{i_0}$.
One has that $$v^{(1)}_{j_0}=v_{j_0}-a_{j_0}\ge \frac{d_{j_0}}{d_{i_0}}(v_{i_0}+a_{i_0}-a_{i_0})=\frac{d_{j_0}}{d_{i_0}}(v^{(1)}_{i_0}-a_{i_0}).$$
One can check that $-d_{j_0}<v^{(1)}_{j_0}=v_{j_0}-a_{j_0}\le a_{j_0}$ and $-d_{i_0}<v^{(1)}_{i_0}=v_{i_0}+a_{i_0}\le a_{i_0}$.
Now we prove that if $(i,j_0)\notin M_v$, then $(i,j_0)\notin M_{v^{(1)}}$. Since $(i,j_0)\notin M_v$,
one has that
$$\frac{d_{j_0}}{d_i}(v_i-a_i)\le v_{j_0}<\frac{d_{j_0}}{d_i}v_i+a_{j_0}.$$
Clearly, one has $v_{j_0}-a_{j_0}< \frac{d_{j_0}}{d_i}v_i+a_{j_0}$. If $v_{j_0}-a_{j_0}< \frac{d_{j_0}}{d_i}(v_i-a_i)$, then
$v_{i_0}<\frac{d_{i_0}}{d_i}(v_i-a_i)$.
If $i>i_0$, then $(i_0,i)\in M_1 $.
If $i<i_0$, then $(i,i_0)\in M_2$. It follows from the definition of $j_0$ that $\frac{v_{j_0}-a_{j_0}}{d_{j_0}}\ge\frac{v_i-a_i}{d_i}$.
But $\frac{v_{j_0}-a_{j_0}}{d_{j_0}}<\frac{v_i-a_i}{d_i}$, which is a contradiction.
Hence
$$v_{j_0}-a_{j_0}\ge \frac{d_{j_0}}{d_i}(v_i-a_i).$$
 That is, $(i,j_0)\notin M_{v^{(1)}}$.
On the other hand, if $(j_0,i)\notin M_v$, then
$$\frac{d_i}{d_{j_0}}(v_{j_0}-a_{j_0})\le v_i<\frac{d_i}{d_{j_0}}v_{j_0}+a_i.$$
Clearly, $\frac{d_i}{d_{j_0}}(v_{j_0}-2a_{j_0})\le v_i$. If $v_i\ge \frac{d_i}{d_{j_0}}(v_{j_0}-a_{j_0})+a_i$,
then $v_i-a_i\ge \frac{d_i}{d_{i_0}}v_{i_0}$.
Note that $i>j_0>i_0$, then $(i_0,i)\in M_1 $. By the definition, we have $\frac{v_{j_0}-a_{j_0}}{d_{j_0}}<\frac{v_i-a_i}{d_i}$, which is a contradiction.
Hence
$$v^{(1)}_{j_0}=v_{j_0}-a_{j_0}\ge \frac{d_{j_0}}{d_i}(v_i-a_i).$$
That is, $(j_0,i)\notin M_{v^{(1)}}$.

We now show that if $(i,i_0)\notin M_v$, then $(i,i_0)\notin M_{v^{(1)}}$.
Since $(i,i_0)\notin M_v$, one has that
$$\frac{d_{i_0}}{d_i}(v_i-a_i)\le v_{i_0}<\frac{d_{i_0}}{d_i}v_i+a_{i_0}.$$
Clearly, $\frac{d_{i_0}}{d_i}(v_i-a_i)\le v_{i_0}+a_{i_0}$.
If $v_{i_0}+a_{i_0}\ge \frac{d_{i_0}}{d_i}v_i+a_{i_0}$, then
$$v_{j_0}-a_{j_0}\ge \frac{d_{j_0}}{d_{i_0}}v_{i_0}\ge \frac{d_{j_0}}{d_{i}}v_{i}. $$
Note that $i<i_0<j_0$. It follows that $(i,j_0)\in M_1$.
Hence $\frac{v_i}{d_i}>\frac{v_{i_0}}{d_{i_0}}$ by the definition of $i_0$, which contradicts to
$v_{i_0}\ge \frac{d_{i_0}}{d_i}v_i$.
Thus $v_{i_0}+a_{i_0}< \frac{d_{i_0}}{d_i}v_i+a_{i_0}$. It follows that $(i,i_0)\notin M_{v^{(1)}}$.
By the similar argument, we can prove that if $(i_0,i)\notin M_v$, then $(i_0,i)\notin M_{v^{(1)}}$.
Hence $|M_{v^{(1)}}|\le |M_v|$.

{\sc Case 2.} $(j_0,i_0)\in M_2$. That is, $v_{i_0}<\frac{d_{i_0}}{d_{j_0}}(v_{j_0}-a_{j_0})$.
If $v-A_{j_0}$ is not cofacial to $A_{i_0}$,
then $$\bar{F}_{j_0}\cdot x^{v-A_{j_0}}=a_{j_0}x^v-d_{j_0}\bar{\lambda}x^{v-A_{j_0}-\gamma}$$
and
$$\bar{F}_{i_0}\cdot x^{v-A_{j_0}}=0-d_{i_0}\bar{\lambda}x^{v-A_{j_0}-\gamma}.$$
Hence $$d_{i_0}a_{j_0}x^v\equiv0\mod \sum_{l=1}^n\bar{F}_l\bar{R}.$$

If $v-A_{j_0}$ is cofacial to $A_{i_0}$, then
$$\bar{F}_{j_0}\cdot x^{v-A_{j_0}}=a_{j_0}x^v-d_{j_0}\bar{\lambda}x^{v-A_{j_0}-\gamma}$$
and
$$\bar{F}_{i_0}\cdot x^{v-A_{j_0}}=a_{i_0}x^{v-A_{j_0}+A_{i_0}}-d_{i_0}\bar{\lambda}x^{v-A_{j_0}-\gamma}.$$
Hence $$d_{i_0}a_{j_0}x^v\equiv d_{j_0}a_{i_0}x^{v-A_{j_0}+A_{i_0}}\mod \sum_{l=1}^n\bar{F}_l\bar{R}.$$
Let $v^{(1)}=v-A_{j_0}+A_{i_0}$. One has that
$$v^{(1)}_{i_0}-a_{i_0}=v_{i_0}+a_{i_0}-a_{i_0}<\frac{d_{i_0}}{d_{j_0}}(v_{j_0}-a_{j_0})=\frac{d_{i_0}}{d_{j_0}}v^{(1)}_{j_0}.$$
By the similar argument as Case 1, we can prove that $|M_{v^{(1)}}|\le|M_v|$.

For $(i_0,j_0)\in M_1$, if $v_{j_0}-a_{j_0}< \frac{d_{j_0}}{d_{i_0}}(v_{i_0}+a_{i_0})+a_{{j_0}}$, then we let $v'=v^{(1)}$ and
$(i_0,j_0)\notin M_{v'}$.
For $(j_0,i_0)\in M_2$, if $v_{j_0}-a_{j_0}\le \frac{d_{j_0}}{d_{i_0}}(v_{i_0}+a_{i_0})+a_{{j_0}}$, then we let $v'=v^{(1)}$ and $(j_0,i_0)\notin M_{v'}$.
Hence $|M_{v'}|< |M_v|$.

If $v_{j_0}-a_{j_0}\ge \frac{d_{j_0}}{d_{i_0}}(v_{i_0}+a_{i_0})+a_{{j_0}}$ for $(i_0,j_0)\in M_1$, or $\frac{d_{i_0}}{d_{j_0}}(v_{j_0}-2a_{j_0})> v_{i_0}+a_{i_0}$ for $(j_0,i_0)\in M_2$, then we repeat the above process for $v^{(1)}=v-A_{j_0}+A_{i_0}$.
Suppose $$M_{v^{(1)}}=\{(i^{(1)}_1,j^{(1)}_1),...,(i^{(1)}_{k^{(1)}},j^{(1)}_{k^{(1)}})\}$$ with
$k^{(1)}\le k$. Let
$$M^{(1)}_1=\{(i,j)\in M_{v^{(1)}}: v^{(1)}_j\ge \frac{d_{j}}{d_{i}}v^{(1)}_{i}+a_{j}\}$$
and
$$M^{(1)}_2=\{(i,j)\in M_{v^{(1)}}: v^{(1)}_j< \frac{d_{j}}{d_{i}}(v^{(1)}_{i}-a_{i})\}.$$
Let
$$J^{(1)}=\{j: (i,j)\in M^{(1)}_1\ {\rm or}\ (j,i)\in M^{(1)}_2\}.$$
Let $j^{(1)}_0$ be the largest element in $J^{(1)}$ such that
$$\frac{v^{(1)}_{j^{(1)}_0}-a_{j^{(1)}_0}}{d_{j^{(1)}_0}}=\max\{ \frac{v^{(1)}_{j}-a_{j}}{d_{j}}: j\in J^{(1)}\} .$$
Let $$I^{(1)}=\{i: (i,j^{(1)}_0)\in M^{(1)}_1 \ {\rm or}\ (j^{(1)}_0,i)\in M^{(1)}_2\}$$
and
$i^{(1)}_0$ be the least element of $I^{(1)}$ such that
$$\frac{v^{(1)}_{i^{(1)}_0}}{d_{i^{(1)}_0}}=\min \Big\{\frac{v^{(1)}_{i}}{d_{i}}: i\in I^{(1)} \Big\} .$$
Clearly, one has $(i^{(1)}_0,j^{(1)}_0)\in M^{(1)}_1$ or $(j^{(1)}_0,i^{(1)}_0)\in M^{(1)}_2$.
Note that $-d_i<v_i\le a_i$ for $i=1,...,n$. We observe that
$$\frac{v^{(1)}_{j^{(1)}_0}-a_{j^{(1)}_0}}{d_{j^{(1)}_0}}\le\frac{v_{j_0}-a_{j_0}}{d_{j_0}}$$
and
$$\frac{v^{(1)}_{i^{(1)}_0}}{d_{i^{(1)}_0}}\ge \frac{v_{i_0}}{d_{i_0}}.$$
Hence after finite steps one can obtain $v'\in \mathbb{Z}^n$ such that
$x^v\equiv cx^{v'}\mod \sum_{l=1}^n\bar{F}_l\bar{R}$,
$-d_i<v'_{i}\le a_i$ for $i=1,...,n$ and $|M_{v'}|< |M_v|$.

Particularly, if $|M_v|=1$, then there are $c\in \mathbb{F}_q$ and $v'\in \mathbb{Z}^n$ such that $x^v\equiv cx^{v'}\mod \sum_{l=1}^n\bar{F}_l\bar{R}$,
$-d_i<v'_{i}\le a_i$ for $i=1,...,n$ and $|M_{v'}|=0$. That is, $v'\in \mathcal{B}$.
This finishes the proof of the claim.

Since $v\notin \mathcal{B}$, there is at least one pair of integers $i,j$ with $1\le i<j\le n$ satisfying
$v_j<\frac{d_j}{d_i}(v_i-a_i)$
or $v_j\ge\frac{d_j}{d_i}v_i+a_j$. 
We prove Lemma \ref{lemn2} by induction on $|M_v|$.
For $|M_v|=1$, it follows from the claim that
 Lemma \ref{lemn2} is true when $|M_v|=1$.

Let $m$ be a positive integer with $m\ge 2$. Suppose that Lemma \ref{lemn2} holds for $|M_v|\le m-1$.
We consider $|M_v|=m$.
It follows from the claim that there are
$c'\in \mathbb{F}_q$ and $u'\in \mathbb{Z}^n$ such that
$$x^v\equiv c'x^{u'}\mod \sum_{l=1}^n\bar{F}_l\bar{R},$$
$-d_i<u'_{i}\le a_i$ for $i=1,...,n$ and $|M_{u'}|\le m-1$.
By the hypothesis, there are $u=(u_1,...,u_n)\in \mathcal{B}$ and $c\in \mathbb{F}_q$ such that
$$x^v\equiv c'x^{u'}\equiv cc'x^u \mod \sum_{l=1}^n\bar{F}_l\bar{R}.$$ Hence Lemma \ref{lemn2} holds for $|M_v|=m$.
This finishes the proof of Lemma \ref{lemn2}.


\end{proof}

\begin{lem}\label{thm3}
Let $v\in \mathbb{Z}^n$. Then $x^v$ is a linear combination of elements in $\bar{B}$ over $\mathbb{F}_q$ modulo $\sum_{l=1}^n\bar{F}_{l}\bar{R}$.
\end{lem}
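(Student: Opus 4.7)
The plan is to reduce an arbitrary $x^v$ with $v \in \mathbb{Z}^n$ modulo $\sum_{l=1}^n \bar{F}_l \bar{R}$ to an $\mathbb{F}_q$-linear combination of $\bar{B}$ in two stages: first, bring $v$ inside the box $\prod_{i=1}^n (-d_i, a_i]$, and then invoke Lemma \ref{lemn2} to finish. Stage one is carried out by induction on a measure of the distance of $v$ from the box.

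For the inductive step, I would use the relations coming from $\bar{F}_i = a_i x^{A_i} - d_i \bar{\lambda} x^{\gamma}$. If $v_i > a_i$ for some $i$, I consider $\bar{F}_i \cdot x^{v - A_i}$ in $\bar{R}$. Since $(v - A_i)_i = v_i - a_i > 0$, the point $v - A_i$ lies outside $C(\Delta_i)$; combined with the fact that $C(\Delta_0), C(\Delta_1), \dots, C(\Delta_n)$ cover $\mathbb{Z}^n$, it follows that $v - A_i \in C(\Delta_0) \cup \bigcup_{k \ne i} C(\Delta_k)$, all of which contain $A_i$, so $A_i$ is automatically cofacial to $v - A_i$. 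A case analysis on whether $v - A_i$ is cofacial with $\gamma$ (equivalently, whether it lies in some $C(\Delta_k)$ with $k \ne i$) then gives either $x^v \equiv 0$ modulo $\sum_l \bar{F}_l \bar{R}$ (precisely when $v_j > 0$ for all $j \ne i$, so $v - A_i$ lies strictly in $C(\Delta_0)$) or the substitution $x^v \equiv (d_i \bar{\lambda}/a_i)\, x^{v - A_i + \gamma}$. Dually, when $v_i \le -d_i$ I choose $i$ minimizing $v_i/d_i$ so that $v \in C(\Delta_i)$; then $v - \gamma$ also lies in $C(\Delta_i)$, securing $\gamma$-cofaciality, and $\bar{F}_i \cdot x^{v - \gamma}$ yields either $x^v \equiv 0$ or $x^v \equiv (a_i/(d_i \bar{\lambda}))\, x^{v + A_i - \gamma}$. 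After each substitution I apply the inductive hypothesis to $v - A_i + \gamma$ or $v + A_i - \gamma$.

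The technical heart---and the main obstacle---is the termination of the induction. A reduction of the first type decreases $v_i$ by $a_i + d_i$ but also decreases every $v_j$ ($j \ne i$) by $d_j$, potentially pushing some $v_j$ below $-d_j$ and creating a new violation; the second type is coupled in the same way. A naive potential such as $\sum_i \max(v_i - a_i, 0) + \sum_i \max(-d_i - v_i, 0)$ is therefore not monotonic. I would enforce termination by picking the reduction index greedily---largest $v_i - a_i$ for Case I, smallest $v_i + d_i$ for Case II---and by a lexicographic refinement of this potential, exploiting the observation (implicit in the cone analysis above) that points deep in the interior of any single cone $C(\Delta_k)$ force $x^v \equiv 0$ outright, so only ``boundary'' configurations require genuine iteration. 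This greedy selection mirrors the extremal choice of $j_0$ and $i_0$ in the proof of Lemma \ref{lemn2}. Once $v$ has been brought into the box, Lemma \ref{lemn2} completes the proof.
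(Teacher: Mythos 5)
Your overall strategy---reduce $v$ into the box $\prod_i(-d_i,a_i]$ by elementary relations, then invoke Lemma~\ref{lemn2}---is a reasonable plan, and your cone analysis of the individual reduction steps (that $A_i$ is automatically cofacial to $v-A_i$ when $v_i>a_i$, that interior points of a single $C(\Delta_k)$ outside the box die immediately, that $v-\gamma$ stays in $C(\Delta_i)$ when $v\in C(\Delta_i)$) is correct. However, the approach you take is genuinely different from the paper's, and the single place you flag as ``the technical heart'' is exactly where the proof is incomplete. Your Type~I/Type~II moves shift $v$ by $\gamma-A_i$ or $A_i-\gamma$, which perturbs \emph{every} coordinate by $\pm d_j$; as you note yourself, this can push previously safe coordinates out of the box, so the naive potential is not monotone. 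The fix you propose---greedy choice of index plus ``a lexicographic refinement of this potential, exploiting the observation that deep-interior points vanish''---is a plan, not a proof: you never exhibit a concrete well-founded order that your chosen rule decreases, nor verify that the greedy choice prevents cycling. Since termination of the reduction is precisely the content of the lemma, asserting that such a refinement exists leaves the central step unjustified.

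For comparison, the paper avoids this difficulty by working cone by cone and by using a different type of relation. Inside $C(\Delta_0)$, rather than a single $\bar F_i$-relation, it uses \emph{two} relations $\bar F_j\cdot x^{v-A_j}$ and $\bar F_k\cdot x^{v-A_j}$ (with $v_j>a_j$ and $v_k=0$) to eliminate the $\gamma$-term entirely, yielding the swap $x^v\equiv c\,x^{v-A_j+A_k}$. This swap changes only two coordinates, turns the zero at position $k$ into $a_k>0$, and therefore strictly decreases the cardinality $|S_v|$ of the zero set, giving transparent termination; when $|S_v|=0$ the term $x^v$ dies outright. In the cones $C(\Delta_i)$ with $v_i<0$ the paper handles the equality locus $v_j=\tfrac{d_j}{d_i}v_i$ and the interior strip $-d_i\le v_i<0$ separately, again with explicit finiteness bounds. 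So the paper's reductions are tailored so that an obvious combinatorial quantity decreases at each step, which is exactly the ingredient your outline is missing. To turn your proposal into a proof you would need to produce a concrete decreasing potential for the $\pm(\gamma-A_i)$ moves (or show your moves compose to the paper's swaps in a controlled way) and verify it case by case against both move types.
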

\begin{proof}
 First, we prove Lemma \ref{thm3} is true for $n=1$. If $v>a_1$, then $v-a_1$ is not cofacial to $-d_1$.
Hence $\bar{F}_1(x^{v-a_1})=a_1x^v$ and
$$a_1x^v\equiv0 \mod \bar{F}_{1}\bar{R}.$$
If $v< -d_1$, then $v+d_1$ is not cofacial to $a_1$.
Thus $\bar{F}_1(x^{v+d_1})=-d_1x^v$. That is,
$$d_1x^v\equiv0 \mod \bar{F}_{1}\bar{R}.$$
If $v=-d_1$, then $v+d_1$ is cofacial to $a_1$ and $d_1$.
It follows that $\bar{F}_1(x^{v+d_1})=a_1x^{a_1}-d_1x^v$ and
$$a_1x^{a_1}\equiv d_1x^v \mod \bar{F}_{1}\bar{R}.$$
Hence Lemma \ref{thm3} is true when $n=1$.

In the following, we let $n\ge 2$ and divide the proof into two cases.

{\sc Case 1.} $v\in C(\Delta_0)$.
Let
$$S_v:=\{k: v_k=0\} .$$
 If $v_i\le a_i$ for all $i$, it then follows from Lemma \ref{lemn2} that Lemma \ref{thm3} is true.
Hence in what follows, we assume that there is at least one integer $j$ such that $v_j>a_j$.
If $|S_v|=0$, then
$v_i>0$ for all $i$. Let $j$ be the integer such that $v_j>a_j$.
One has that $v-A_j$ is not cofacial to $\gamma$.
Then $F_j\cdot x^{v-A_j}=x^v $.
Hence $x^v\equiv 0 \mod \sum_{l=1}^n\bar{F}_{l}\bar{R}.$
Thus Lemma \ref{thm3} is true if $|S_v|=0$.

If $|S_v|=1$, then there is one integer $k$ such that $v_k=0$.
Let $j$ be the integer such that $v_j>a_j$. Then $v-A_j$ is cofacial to all elements in $supp(\bar{F})$.
Hence
$$\bar{F}_j\cdot x^{v-A_j}=a_jx^v-d_j\bar{\lambda}x^{v-A_j-\gamma}$$
and
$$\bar{F}_k\cdot x^{v-A_j}=a_kx^{v-A_j+A_k}-d_k\bar{\lambda}x^{v-A_j-\gamma}.$$
Hence $$a_jd_kx^v\equiv a_kd_jx^{v-A_j+A_k} \mod \sum_{l=1}^n\bar{F}_{l}\bar{R}.$$
We can see that $v'=v-A_j+A_k\in C(\Delta_0)$ and $|S_{v'}|=0$. Hence we conclude that Lemma \ref{thm3} is true for $|S_v|=1$.
  Then by induction on $|S_v|$, we can prove Lemma \ref{thm3} is true in this case.

{\sc Case 2.} $v\in C(\Delta_i)$ for $i\in \{1,...,n\}$. Then $v_i\le 0$ and $v_j\ge \frac{d_j}{d_i} v_i$ for $1\le j\neq i\le n$.
If $v_i=0$, then $v_j\ge 0$ for $j=1,...,n$. Hence $v\in C(\Delta_0)$. Lemma \ref{thm3} has been proved to be true when $v_i=0$.
In what follows, we let $v_i<0$.

Assume that $v_j> \frac{d_j}{d_i} v_i$ for $1\le j\neq i\le n$. If there is an integer $k$ such that $k\neq i$ and $v_k>a_k$, then $v-A_k\in C(\Delta_i)$ and $v-A_k$ is not cofacial to $A_i$.
Then
$$\bar{F}_k\cdot x^{v-A_k}=a_kx^v-d_k\bar{\lambda}x^{v-A_k-\gamma}$$
and
$$\bar{F}_i\cdot x^{v-A_k}=-d_i\bar{\lambda}(v-A_k-\gamma).$$
Hence
$$x^v\equiv 0 \mod \sum_{l=1}^n\bar{F}_{l}\bar{R}.$$
In what follows, we suppose $v_j\le a_j$ for all $j=1,\cdots,n$.
If $-d_i<v_i<0$, then $v_j>-d_j$ for all $j\neq i$. It follows from Lemma \ref{lemn2} that
Lemma \ref{thm3} is true for $-d_i<v_i<0$.
If $v_i< -d_i$, then $v_i+d_i< 0$.
It is easy to check that $v_j+d_j> \frac{d_j}{d_i} (v_i+d_i)$.
Hence $(v_1+d_1,\cdots,v_n+d_n)\in C(\Delta_i)$ and is not cofacial to $A_i$.
Then $$F_i\cdot x^{v+\gamma}=-d_i\bar{\lambda}x^v .$$
Hence $x^v\equiv 0 \mod \sum_{l=1}^n\bar{F}_{l}\bar{R}.$
Thus Lemma \ref{thm3} is true for $v_i< -d_i$.
If $v_i=-d_i$, then $v_i+d_i=0$ and $$v_j+d_j> \frac{d_j}{d_i} (v_i+d_i)= 0$$ for all $j=1,\cdots,n$.
 Hence $v+\gamma$ is cofacial to all elements in $supp(\bar{F})$.
 Then $$F_i\cdot x^{v+\gamma}=a_ix^{v+\gamma+A_i}-d_i\bar{\lambda}x^v .$$
Evidently, $v+\gamma+A_i\in C(\Delta_0)$ and $$a_ix^{v+\gamma+A_i}\equiv d_i\bar{\lambda}x^v \mod \sum_{l=1}^n\bar{F}_l\bar{R}.$$
Hence by Case 1 one has that Lemma \ref{thm3} is true for $v_i=-d_i$.

Assume that $v_j= \frac{d_j}{d_i} v_i$ for some $1\le j\neq i\le n$.
If $-d_i\le v_i<0$, then $v_k\ge-d_k$ for all $1\le k\neq i\le n$.
Then
$$F_{i}\cdot x^{v+\gamma}=a_{i}x^{v+\gamma+A_{i}}-d_{i}\bar{\lambda}x^v .$$
Clearly, $v+\gamma+A_{i}\in C(\Delta_0)$. It follows from Case 1 that Lemma \ref{thm3} is true.

Suppose $v_i<-d_i$. Let $J=\{j_1,\cdots,j_m\}$ such that $v_{j_l}= \frac{d_{j_l}}{d_i} v_i$ for $l=1,\cdots,m$. Then $v_{j_l}< -d_{j_l}$ for
$l=1,\cdots,m$.
Clearly, $v_{j_l}+d_{j_l}= \frac{d_{j_l}}{d_i} (v_i+d_i)$ for $l=1,\cdots,m$.
Then using $F_{j_1}$ to act on $x^{v+\gamma}$, we have
$$F_{j_1}\cdot x^{v+\gamma}=a_{j_1}x^{v+\gamma+A_{j_1}}-d_{j_1}\bar{\lambda}x^v .$$
 Clearly, $v_{j_1}+d_{j_1}+a_{j_1}>\frac{d_{j_1}}{d_i}(v_i+d_i)$. If $v_i+d_i\ge-d_i$, then we have done.
 If $v_i+d_i<-d_i$, then we continue use $F_{j_2}$ to act on $x^{v+2\gamma+A_{j_1}}$.
 The process will stop after finite steps since $J$ is a finite set.
After at most $m$ steps, we get $$v_{j_l}+md_{j_l}+a_{j_l}>\frac{d_{j_l}}{d_i}(v_i+md_i)$$ for $l=1,\cdots,m$.
Then by the above discussion that Lemma \ref{thm3} holds for $x^{v+m\gamma+\sum_{l=1}^m A_{j_l}}$. So does $x^v$.
Hence Lemma \ref{thm3} is true in this case.

This finishes the proof of Lemma \ref{thm3}.
\end{proof}

To prove Theorem \ref{thm10} is true, it remains to show that the
number of lattices in $\mathcal{B}$ is equal to $n!Vol \Delta(\bar{F})$.
For $n\ge 2$, we denote $v=(v_1,\cdots,v_n)$, and
define sets
$$\mathcal{A}:=\{v\in \mathbb{Z}^n: (v_i,v_j)\ {\rm satisfies}\ (\ref{eq1})\ {\rm for}\ 1\le i<j \le n-1, -d_i<v_i\le a_i\ {\rm for}\ 1\le i\le n\}$$
and
$$\mathcal{A}_0:=\{v\in \mathcal{A}: v\notin \mathcal{B}\}.$$
That is, if $v\in \mathcal{A}_0$, then there is an integer $1\le i\le n-1$ such that $v_n\ge \frac{d_n}{d_i}v_i+a_n$ or
$v_n< \frac{d_n}{d_i}(v_i-a_i)$.
Let $$\bar{\mathcal{T}}:=\{v\in \mathcal{A}:\ \exists\ 1\le i\le n-1\ \ {\rm such\ that}\ v_n\ge\frac{d_n}{d_i}v_i+a_n \} $$
and
\begin{align*}
\mathcal{S}:=\{v\in \mathcal{A}: \ \exists\ 1\le i\le n-1\ {\rm such\ that}\    \frac{d_n}{d_i}(v_i-a_i)>v_n
\}.
\end{align*}
Hence $\mathcal{A}_0=\bar{\mathcal{T}}\cup \mathcal{S}$. Suppose $v\in \mathcal{S}$ and $i$ is the integer such that $v_n<\frac{d_n}{d_i}(v_i-a_i)$.
For any integer $j$ such that $1\le i<j\le n-1$, one has $\frac{d_j}{d_i}(v_i-a_i)\le v_j<\frac{d_j}{d_i}v_i+a_j$,
then by $v_n<\frac{d_n}{d_i}(v_i-a_i)$, we conclude that $v_n<\frac{d_n}{d_j}v_j+a_n$.
Similarly, we can show for $j<i\le n-1$, one has $v_n<\frac{d_n}{d_j}v_j+a_n$. Hence $v\notin \bar{\mathcal{T}}$ and
$\bar{\mathcal{T}}\cap \mathcal{S}=\emptyset$.
Let
$$\mathcal{M}=\{v\in \mathcal{A}: a_n-d_n<v_n\le a_n,\ \exists\ 1\le i\le n-1\ {\rm such\ that}\ -d_i<v_i\le 0\}$$
and
\begin{align*}
\mathcal{T}:=\{v\in \mathcal{A}: a_n-d_n<v_n\le a_n,\ &\exists\ 1\le i\le n-1\ {\rm such\ that}\ -d_i<v_i\le 0\\
& {\rm and}\
v_n< \frac{d_n}{d_j}v_j+a_n \ {\rm for\ all}\ j=1,\cdots,n-1
\}.
\end{align*}
Hence $\mathcal{T}\subset \mathcal{M}$.
If $v\in \bar{\mathcal{T}}$, then there is an integer $i$ such that $v_n\ge\frac{d_n}{d_i}v_i+a_n$.
 One can check that $-d_i<v_i\le 0$ and $a_n-d_n<v_n\le a_n$.
Then $\bar{\mathcal{T}}\subset \mathcal{M}$ and $\mathcal{M}=\mathcal{T}\cup \bar{\mathcal{T}}$.

The number of lattice points in $\mathcal{B}$ can be obtained by counting points in $\mathcal{A}$ and $\mathcal{A}_0$. Specifically, $|\mathcal{B}|=|\mathcal{A}|-|\mathcal{A}_0|$.
In the following, we first show  that $|\mathcal{A}_0|=|\mathcal{M}|$
by building a bijection between $\mathcal{T}$ and $\mathcal{S}$, then get $|\mathcal{M}|$.

For $v\in \mathcal{T}$, let $\mathcal{I}_v:=\{i: -d_i<v_i\le 0\}$. Let $i_0$ be the least element of $\mathcal{I}_v$ such that
$$\frac{v_{i_0}}{d_{i_0}}=\min\big\{\frac{v_i}{d_i}: i\in \mathcal{I}_v\big\}.$$
Define $\psi: \mathcal{T} \rightarrow \mathcal{S} $ by
$$\psi(v):=v+A_{i_0}-A_n.$$
For $u\in \mathcal{S}$, let $\mathcal{J}_u:=\{j: \frac{d_n}{d_{j}}(u_{j}-a_{j})>u_n\}$.
Let $j_0$ be the largest element of $\mathcal{J}_u$ such that
$$\frac{u_{j_0}-a_{j_0}}{d_{j_0}}=\max \big\{ \frac{u_j-a_j}{d_j}: j\in \mathcal{J}_u\big\} .$$
Define $\varphi: \mathcal{S} \rightarrow \mathcal{T} $ by
$$\varphi(u)=u-A_{j_0}+A_n. $$

Then we have
\begin{lem}\label{lem3}
$\psi(\mathcal{T})\subseteq \mathcal{S}$ and $\varphi(\mathcal{S})\subseteq \mathcal{T}$.
\end{lem}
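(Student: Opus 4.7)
The plan is to verify, for each direction, three separate claims: (a) the box constraints $-d_k<\cdot\le a_k$ hold, (b) the $\mathcal{A}$-constraints (\ref{eq1}) for pairs in $\{1,\dots,n-1\}$ are preserved, and (c) the distinguishing condition that places the image in $\mathcal{S}$ or $\mathcal{T}$ is satisfied. Since $\psi$ and $\varphi$ alter only the $i_0$-th (resp.\ $j_0$-th) and $n$-th coordinates, only the pairs involving the modified index need checking in (b). The extremality in the definitions of $i_0$ and $j_0$ will be what drives everything.

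For $\psi\colon\mathcal{T}\to\mathcal{S}$: set $u=\psi(v)$. The boxes are immediate from $-d_{i_0}<v_{i_0}\le 0$ and $a_n-d_n<v_n\le a_n$. For (c), taking $i=i_0$ the required inequality $u_n<\tfrac{d_n}{d_{i_0}}(u_{i_0}-a_{i_0})$ is precisely the $\mathcal{T}$-condition $v_n<\tfrac{d_n}{d_{i_0}}v_{i_0}+a_n$ rewritten. For (b), for a pair $(i_0,j)$ with $i_0<j\le n-1$ the upper bound is slack and the lower bound $\tfrac{d_j}{d_{i_0}}v_{i_0}\le v_j$ follows from the minimality of $v_{i_0}/d_{i_0}$ over $\mathcal{I}_v$ (when $j\in\mathcal{I}_v$) or from $v_j>0\ge\tfrac{d_j}{d_{i_0}}v_{i_0}$ (when $j\notin\mathcal{I}_v$). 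For a pair $(i,i_0)$ with $i<i_0$ the lower bound is slack and the upper bound $v_{i_0}<\tfrac{d_{i_0}}{d_i}v_i$ (strict) uses that $i_0$ is the \emph{least} minimizer (if $i\in\mathcal{I}_v$) or that $v_i>0$ (if $i\notin\mathcal{I}_v$).

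For $\varphi\colon\mathcal{S}\to\mathcal{T}$: set $v=\varphi(u)$. The condition $j_0\in\mathcal{J}_u$ gives both $v_{j_0}=u_{j_0}-a_{j_0}>\tfrac{d_{j_0}}{d_n}u_n>-d_{j_0}$ and $u_n<\tfrac{d_n}{d_{j_0}}(u_{j_0}-a_{j_0})\le 0$, from which the box constraints follow. For the $\mathcal{A}$-constraints in (b), the dichotomy is that for a neighboring index the ratio $(u_k-a_k)/d_k$ is either $\le (u_{j_0}-a_{j_0})/d_{j_0}$ by the maximality-plus-largest-index choice of $j_0$ (if $k\in\mathcal{J}_u$), or $\le u_n/d_n<(u_{j_0}-a_{j_0})/d_{j_0}$ (if $k\notin\mathcal{J}_u$); applying this to $k=j$ (when $j>j_0$) or $k=i$ (when $i<j_0$) yields both required bounds. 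The hardest step is (c): showing $v_n<\tfrac{d_n}{d_j}v_j+a_n$ for \emph{every} $j\in\{1,\dots,n-1\}$, equivalently $u_n/d_n<u_j/d_j$ when $j\ne j_0$. When $j\in\mathcal{J}_u$ this is clear, but when $j\notin\mathcal{J}_u$ we must invoke the $\mathcal{A}$-inequality between $j$ and $j_0$: if $j>j_0$ one uses $u_j\ge\tfrac{d_j}{d_{j_0}}(u_{j_0}-a_{j_0})$, and if $j<j_0$ one uses $u_{j_0}-a_{j_0}<\tfrac{d_{j_0}}{d_j}u_j$, in both cases chaining through $(u_{j_0}-a_{j_0})/d_{j_0}>u_n/d_n$. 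This chaining argument, together with the careful least/largest-index tiebreakers in the definitions of $i_0$ and $j_0$, is the main obstacle; the rest is bookkeeping.
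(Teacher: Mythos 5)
Your proposal is correct and follows essentially the same route as the paper's proof: establish the defining $\mathcal{S}$- (resp.\ $\mathcal{T}$-) inequality at the modified index, check the box constraints, and re-verify the $\mathcal{A}$-conditions only for pairs involving $i_0$ (resp.\ $j_0$) via the same dichotomy on membership in $\mathcal{I}_v$ (resp.\ $\mathcal{J}_u$) combined with the least/largest-index tiebreakers. No substantive differences.
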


\begin{proof}
Let $v\in \mathcal{T}$.
Let $u=(u_1,...,u_n)=\psi(v)$. Clearly,
$$u_n=v_n-a_n<\frac{d_n}{d_{i_0}}(v_{i_0}+a_{i_0}-a_{i_0})=\frac{d_n}{d_{i_0}}(u_{i_0}-a_{i_0}).$$
So to show $\psi(\mathcal{T})\subseteq \mathcal{S}$, it remains to show that $u\in \mathcal{A}$. Obviously, $-d_n<u_n\le a_n$
and $-d_{i_0}<u_{i_0}\le a_{i_0}$.
For $i<n$, if $i_0\neq i$, then $-d_i<u_i=v_i\le a_i$.
We also have (\ref{eq1}) is true for $i_0\neq i,j<n$.

For $1\le i<i_0$, since $v\in \mathcal{T}$, one has
$$\frac{d_{i_0}}{d_{i}}(v_{i}-a_i)\le v_{i_0}<\frac{d_{i_0}}{d_{i}}v_{i}+a_{i_0}.$$
Immediately, $\frac{d_{i_0}}{d_{i}}(v_{i}-a_i)\le v_{i_0}+a_{i_0}$. If $i\in \mathcal{I}_v$, then $\frac{v_{i_0}}{d_{i_0}}<\frac{v_i}{d_i}$ by the definition of $i_0$. So $v_{i_0}+a_{i_0}<\frac{d_i}{d_{i_0}}v_i+a_{i_0}$. If $i\not\in \mathcal{I}_v$, then from $v_i>0\ge v_{i_0}$ one also has $v_{i_0}+a_{i_0}<\frac{d_i}{d_{i_0}}v_i+a_{i_0}$. Thus we obtain
$$\frac{d_{i_0}}{d_{i}}(u_{i}-a_i)\le u_{i_0}<\frac{d_i}{d_{i_0}}u_i+a_{i_0}$$
for $1\le i<i_0$. Similarly, we can prove
$$\frac{d_{i}}{d_{i_0}}(u_{i_0}-a_{i_0})\le u_{i}<\frac{d_{i}}{d_{i_0}}u_{i_0}+a_{i}$$ for $i_0<i\le n-1$. Hence $u\in \mathcal{A}$ and $\psi(\mathcal{T})\subseteq \mathcal{S}$.

We now show that $\varphi(\mathcal{S})\subseteq \mathcal{T}$. Let $u'\in \mathcal{S}$.
Denote by $v':=\varphi(u')$ and write $v':=(v_1',...,v_n')$. Clearly,
\begin{equation}\label{eq2}
v_n'=u_n'+a_n<\frac{d_n}{d_{j_0}}(u_{j_0}'-a_{j_0})+a_{n}=\frac{d_n}{d_{j_0}}v_{j_0}'+a_{n}.
\end{equation}
Note that $-d_n<u_n'<0$. Hence $-d_{j_0}<v_{j_0}'\le 0$.
For $1\le i<j_0$, one has $$\frac{d_{j_0}}{d_i}(u_i'-a_i)\le u_{j_0}'< \frac{d_{j_0}}{d_i}u_i'+a_{j_0}.$$
It then follows from (\ref{eq2}) that
$$v_n'=u_n'+a_n<\frac{d_n}{d_i}u_i'+a_n=\frac{d_n}{d_i}v_i'+a_n $$
for $1\le i<j_0$.
For $j_0<i\le n-1$, one has that
 $$ \frac{d_i}{d_{j_0}}(u_{j_0}'-a_{j_0})\le u_{i}'< \frac{d_i}{d_{j_0}}u_{j_0}'+a_{i}.$$
 From (\ref{eq2}), one has
 $$v_n'=u_n'+a_n<\frac{d_n}{d_i}u_i'+a_n=\frac{d_n}{d_i}v_i'+a_n  $$
 for $j_0<i\le n-1$.
Hence to show $\varphi(\mathcal{S})\subseteq \mathcal{T}$, it remains to show that $v'\in \mathcal{A}$. Obviously, $-d_i<v_i'\le a_i$ for $i=1,...,n$.
If $j_0\neq i,j<n$, then $v_i'=u_i', v_j'=u_j'$.
Hence (\ref{eq1}) is true for $v_i',v_j'$.

For $1\le i<j_0$, one has
$$\frac{d_{j_0}}{d_{i}}(u_{i}'-a_i)\le u_{j_0}'<\frac{d_{j_0}}{d_{i}}u_{i}'+a_{j_0}.$$
Immediately, $u_{j_0}'-a_{j_0}<\frac{d_{j_0}}{d_{i}}u_{i}'+a_{j_0}$. If $i\in \mathcal{J}_{u'}$, then $\frac{u_{j_0}'-a_{j_0}}{d_{j_0}}\ge \frac{u_i'-a_i}{d_i}$ by the definition of $j_0$. So $u_{j_0}'-a_{j_0}\ge \frac{d_{j_0}}{d_{i}}(u_{i}'-a_i)$. If $i\not\in \mathcal{J}_{u'}$, it then follows from $u_n'\ge \frac{d_n}{d_i}(u_i'-a_i)$ that $u_{j_0}'-a_{j_0}\ge \frac{d_{j_0}}{d_{i}}(u_i'-a_{i})$. Thus
$$\frac{d_{j_0}}{d_{i}}(u_{i}'-a_i)\le u_{j_0}'-a_{j_0}<\frac{d_{j_0}}{d_{i}}u_i'+a_{j_0}$$
 for $1\le i<j_0$. By some similar arguments we can prove
 $$\frac{d_{i}}{d_{j_0}}(u_{j_0}'-2a_{j_0})\le u_{i}'<\frac{d_{i}}{d_{j_0}}(u_{j_0}'-a_{j_0})+a_{i}$$ for $j_0<i\le n-1$. That is, $v'\in \mathcal{A}$. Hence $\varphi(\mathcal{S})\subseteq \mathcal{T}$.

This finishes the proof of Lemma \ref{lem3}.
\end{proof}

 \begin{lem}\label{lem1}
$\varphi\circ \psi=Id|_{\mathcal{T}}$ and $\psi \circ \varphi=Id|_{\mathcal{S}}$.
\end{lem}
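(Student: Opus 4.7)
The plan is to reduce both identities to a single index-matching claim. For $v\in\mathcal{T}$ with $u=\psi(v)=v+A_{i_0}-A_n$, once we prove $j_0(u)=i_0(v)$, we immediately get
$$\varphi(\psi(v))=u-A_{j_0(u)}+A_n=v+A_{i_0}-A_n-A_{i_0}+A_n=v,$$
and analogously for $\psi\circ\varphi$. So the content is purely combinatorial: check that the extremal indices chosen by the two maps agree after each transformation.

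For $\varphi\circ\psi=\mathrm{Id}|_\mathcal{T}$, I will first verify $i_0\in\mathcal{J}_u$. Using $u_{i_0}-a_{i_0}=v_{i_0}$ and $u_n=v_n-a_n$, the defining inequality of $\mathcal{J}_u$ becomes $v_n<\frac{d_n}{d_{i_0}}v_{i_0}+a_n$, which is part of the definition of $\mathcal{T}$. Then, for any $j\in\mathcal{J}_u\setminus\{i_0\}$ (with $j\le n-1$), we have $u_j=v_j$, so comparing $\frac{u_j-a_j}{d_j}$ with $\frac{u_{i_0}-a_{i_0}}{d_{i_0}}=\frac{v_{i_0}}{d_{i_0}}$ splits naturally into two subcases. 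When $j>i_0$, the strict upper half of the condition $v\in\mathcal{A}$, namely $v_j<\frac{d_j}{d_{i_0}}v_{i_0}+a_j$, gives the strict bound $\frac{v_j-a_j}{d_j}<\frac{v_{i_0}}{d_{i_0}}$; when $j<i_0$, the lower half $\frac{d_{i_0}}{d_j}(v_j-a_j)\le v_{i_0}$ gives the non-strict bound $\frac{v_j-a_j}{d_j}\le\frac{v_{i_0}}{d_{i_0}}$. Since $j_0(u)$ is the \emph{largest} index realizing the max, non-strict ties with smaller $j$ are broken in favor of $i_0$, so $j_0(u)=i_0$.

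The reverse composition $\psi\circ\varphi=\mathrm{Id}|_\mathcal{S}$ is handled symmetrically. Given $u\in\mathcal{S}$, set $v=\varphi(u)\in\mathcal{T}\subset\mathcal{A}$ (via Lemma~3.5). First I check $j_0\in\mathcal{I}_v$: the upper bound $v_{j_0}=u_{j_0}-a_{j_0}\le 0$ uses $u_{j_0}\le a_{j_0}$, and the lower bound $v_{j_0}>-d_{j_0}$ follows by combining $j_0\in\mathcal{J}_u$ with $u_n>-d_n$. Then for $i\in\mathcal{I}_v\setminus\{j_0\}$, I compare $\frac{v_{j_0}}{d_{j_0}}=\frac{u_{j_0}-a_{j_0}}{d_{j_0}}$ with $\frac{v_i}{d_i}=\frac{u_i}{d_i}$ using $u\in\mathcal{A}$: for $i>j_0$ the lower inequality (applied with $i'=j_0$, $j'=i$) gives the non-strict $\frac{u_{j_0}-a_{j_0}}{d_{j_0}}\le\frac{u_i}{d_i}$, while for $i<j_0$ the upper inequality yields the strict version. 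Because $i_0(v)$ is the \emph{least} index realizing the min, the strict exclusion of $i<j_0$ together with admissible ties at $i>j_0$ pins down $i_0(v)=j_0$.

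The only delicate point—and the real content of the lemma—is the pairing between \emph{strict vs.\ non-strict} halves of the inequalities defining $\mathcal{A}$ and the \emph{largest vs.\ least} tie-breaking rules in the definitions of $j_0$ and $i_0$. These must line up in mirror-image fashion on the two sides, which is exactly why $\psi$ selects a minimum over $\mathcal{I}_v$ with the least-index rule while $\varphi$ selects a maximum over $\mathcal{J}_u$ with the largest-index rule. Once this compatibility is observed, the rest is just the vector arithmetic $(A_{i_0}-A_n)+(A_n-A_{j_0})=0$.
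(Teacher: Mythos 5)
Your proof is correct and takes essentially the same route as the paper's: both reduce each identity to showing that the extremal index selected after applying one map coincides with the index used by the other, by pairing the strict versus non-strict halves of the inequalities defining $\mathcal{A}$ with the largest/least tie-breaking rules for $j_0$ and $i_0$. You are in fact slightly more explicit than the paper in verifying $i_0\in\mathcal{J}_u$ and $j_0\in\mathcal{I}_v$ and in writing out the second composition, which the paper dispatches with ``by the same argument.''
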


\begin{proof}
First, we show $\varphi\circ \psi=Id|_{\mathcal{T}}$.
Let $v\in \mathcal{T}$ and $i_0\in \mathcal{I}_v$ be the least integer such that
$\frac{v_{i_0}}{d_{i_0}}=\min\big\{\frac{v_i}{d_i}: i\in \mathcal{I}_v\big\}$.
Then $\psi(v)=u:=v+A_{i_0}-A_n\in \mathcal{S}$.
For $i<i_0$, one has
$$\frac{d_{i_0}}{d_{i}}(v_{i}-a_{i})\le v_{i_0}<\frac{d_{i_0}}{d_{i}}v_{i}+a_{i_0}. $$
Hence
$$ v_{i_0}+a_{i_0}-a_{i_0}\ge \frac{d_{i_0}}{d_{i}}(v_{i}-a_{i})$$
for $i<i_0$.
For $i>i_0$, one has
$$\frac{d_{i}}{d_{i_0}}(v_{i_0}-a_{i_0})\le v_{i}<\frac{d_{i}}{d_{i_0}}v_{i_0}+a_{i}. $$
Hence
$$ v_{i_0}+a_{i_0}-a_{i_0}>\frac{d_{i_0}}{d_{i}}(v_{i}-a_{i})$$
for $i>i_0$.
We conclude that $i_0$ is the largest element of $\mathcal{J}_{u}$ such that
$$\frac{u_{i_0}-a_{i_0}}{d_{i_0}}=\max \big\{ \frac{u_j-a_j}{d_j}: j\in \mathcal{J}_{u}\big\} .$$
It follows that $$\varphi\circ \psi(v)=\varphi(v+A_{i_0}-A_n)=v+A_{i_0}-A_n-A_{i_0}+A_n=v.$$
Thus $\varphi\circ \psi=Id|_{\mathcal{T}}$.

By the same argument, we can show that if $u\in \mathcal{S}$, then $\psi\circ \varphi (u)=u$.
That is, $\psi \circ \varphi=Id|_{\mathcal{S}}$.
This finishes the proof of Lemma \ref{lem1}.
\end{proof}

It follows that $|\mathcal{A}_0|=|\mathcal{M}|$, from which we have the following.

\begin{lem}\label{lem2}
For $n\ge 1$, $|\mathcal{B}|=n!Vol \Delta(\bar{F}).$
\end{lem}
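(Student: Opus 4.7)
The plan is to proceed by induction on $n$. The base case $n=1$ is immediate: $\mathcal{B}=\{v\in\mathbb{Z}:-d_1<v\le a_1\}$ has $a_1+d_1$ elements, which matches $1!\,\mathrm{Vol}(\Delta(\bar F))=a_1+d_1$.

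For $n\ge 2$, I would combine Lemma~\ref{lem3} and Lemma~\ref{lem1} as follows. From the disjoint unions $\mathcal{A}_0=\bar{\mathcal{T}}\sqcup\mathcal{S}$ and $\mathcal{M}=\mathcal{T}\sqcup\bar{\mathcal{T}}$ already established, together with the fact that $\psi:\mathcal{T}\to\mathcal{S}$ and $\varphi:\mathcal{S}\to\mathcal{T}$ are mutually inverse bijections, we obtain $|\mathcal{A}_0|=|\bar{\mathcal{T}}|+|\mathcal{T}|=|\mathcal{M}|$, and hence $|\mathcal{B}|=|\mathcal{A}|-|\mathcal{M}|$.

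Next I would reduce $|\mathcal{A}|$ and $|\mathcal{M}|$ to the analogous $(n-1)$-variable set, which I denote $\mathcal{B}_{n-1}$, built from the data $(a_1,\dots,a_{n-1};d_1,\dots,d_{n-1})$. Because the pairwise constraints defining $\mathcal{A}$ involve only indices $1\le i<j\le n-1$ while $v_n$ ranges freely over $a_n+d_n$ values, $|\mathcal{A}|=(a_n+d_n)|\mathcal{B}_{n-1}|$. The definition of $\mathcal{M}$ factors similarly: $v_n$ contributes a factor $d_n$, and $(v_1,\dots,v_{n-1})$ must be an element of $\mathcal{B}_{n-1}$ with at least one coordinate $\le 0$. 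The key elementary observation is that every tuple with $0<v_i\le a_i$ for all $1\le i\le n-1$ automatically satisfies the constraints~(\ref{eq1}): the lower bound $\frac{d_j}{d_i}(v_i-a_i)$ is nonpositive while $v_j\ge 1$, and $\frac{d_j}{d_i}v_i+a_j>a_j\ge v_j$. Hence the purely positive subset of $\mathcal{B}_{n-1}$ has exactly $\prod_{i=1}^{n-1}a_i$ elements, so $|\mathcal{M}|=d_n\bigl(|\mathcal{B}_{n-1}|-\prod_{i=1}^{n-1}a_i\bigr)$, yielding the recursion
$$|\mathcal{B}|=a_n|\mathcal{B}_{n-1}|+d_n\prod_{i=1}^{n-1}a_i.$$

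On the volume side, I would first observe that the origin lies in the relative interior of $\mathrm{conv}(A_1,\dots,A_n,\gamma)$ via the explicit convex combination with coefficients $\alpha_i=\beta d_i/a_i$ and $\beta=1/(1+\sum_i d_i/a_i)$, so $\Delta(\bar F)$ is the $n$-simplex with vertices $\gamma,A_1,\dots,A_n$. The matrix-determinant lemma applied to $\mathrm{diag}(a_1,\dots,a_n)+\mathbf{1}(d_1,\dots,d_n)$ (equivalently, cofactor expansion along the row $\gamma$) gives $n!\,\mathrm{Vol}(\Delta(\bar F))=\prod_{i=1}^n a_i+\sum_{i=1}^n d_i\prod_{j\neq i}a_j$, which obeys the same recursion as $|\mathcal{B}|$ and agrees in the base case, closing the induction. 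The only delicate point is the cofacial-automaticity observation that isolates the purely positive subcount inside $\mathcal{B}_{n-1}$; everything else is routine bookkeeping, granted the bijection of Lemma~\ref{lem1}.
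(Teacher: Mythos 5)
Your proposal is correct and follows essentially the same route as the paper: induction on $n$, with $|\mathcal{A}_0|=|\mathcal{M}|$ from the bijections of Lemmas~\ref{lem3} and~\ref{lem1}, the product counts $|\mathcal{A}|=(a_n+d_n)|\mathcal{B}_{n-1}|$ and $|\mathcal{M}|=d_n\bigl(|\mathcal{B}_{n-1}|-\prod_{i=1}^{n-1}a_i\bigr)$, and the closed form $\prod_i a_i+\sum_j d_j\prod_{i\neq j}a_i$ for both sides. You merely supply a bit more detail than the paper on the two routine points (why the purely positive tuples automatically satisfy~(\ref{eq1}), and the determinant computation of $n!\,Vol(\Delta(\bar F))$), which is fine.
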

\begin{proof}
Let $\mathcal{B}_1$ be the set $v\in \mathbb{Z}$ such that $-d_1<v\le a_1$.
For $n\ge 2$, we let $\mathcal{B}_n$ be the set of $(v_1,\cdots, v_n)\in \mathbb{Z}^n$ such that
$-d_i<v_i\le a_i$ for all $i=1,\cdots,n$, and
$v_i,v_j$ such that (\ref{eq1})
for $1\le i<j\le n$.

In what follows, we compute $|\mathcal{B}_n|$ by induction.
Let $n=1$. It is evidently that $|\mathcal{B}_1|=a_1+d_1.$
Let $n=2$. Then we count that $$|\mathcal{B}_2|=a_1a_2+a_1d_2+a_2d_1.$$
Consider $n\ge 3$. Assume that
$$|\mathcal{B}_{n-1}|=\prod_{i=1}^{n-1} a_i+\sum_{j=1}^{n-1}d_j\prod_{i=1,i\neq j}^{n-1}a_i.$$
The number of lattices in $\mathcal{B}_{n-1}$ satisfying $v_i> 0$ for all $i=1,...,n-1$ is $\prod_{i=1}^{n-1} a_i$.
Hence the number of lattices in $\mathcal{B}_{n-1}$ such that $-d_i<v_i\le 0$ for some $1\le i \le n-1$
 is $\sum_{j=1}^{n-1}d_j\prod_{i=1,i\neq j}^{n-1}a_i$.
 Hence
  $$|\mathcal{M}|=d_n\sum_{j=1}^{n-1}d_j\prod_{i=1,i\neq j}^{n-1}a_i.$$
Clearly,
$$|\mathcal{A}|=(\prod_{i=1}^{n-1} a_i+\sum_{j=1}^{n-1}d_j\prod_{i=1,i\neq j}^{n-1}a_i )(a_n+d_n).$$
Note that $\mathcal{B}_n=\mathcal{A}-\mathcal{A}_0$.
It then follows from $|\mathcal{M}|=|\mathcal{A}_0|$ that
$$|\mathcal{B}_n|=\Big(\prod_{i=1}^{n-1} a_i+\sum_{j=1}^{n-1}d_j\prod_{i=1,i\neq j}^{n-1}a_i \Big)(a_n+d_n)-|\mathcal{M}|=\prod_{i=1}^{n} a_i+\sum_{j=1}^{n}d_j\prod_{i=1,i\neq j}^{n}a_i.$$

Clearly, $|\mathcal{B}|=|\mathcal{B}_1|=a_1+d_1=\Delta(\bar{F})$ for $n=1$. For $n\ge 2$, we compute
$$n!Vol \Delta(\bar{F})=\prod_{i=1}^n a_i+\sum_{j=1}^nd_j\prod_{i=1,i\neq j}^na_i=|\mathcal{B}_n|=|\mathcal{B}|.$$
This finishes the proof of Lemma \ref{lem2}.
\end{proof}

Now we can give the proof of Theorem \ref{thm10}.

{\it Proof of Theorem \ref{thm10}.} It follows from Lemma \ref{thm3} and Lemma \ref{lem2} that $\bar{B}$ is a basis of
$$H^n(\Omega^{\bullet}(\bar{R},\nabla(\bar{F})))=\bar{R}/\sum_{l=1}^nx_l \frac{\partial \bar{F}}{\partial x_l}\bar{R} .$$
Then by Proposition \ref{prop1} one has that Theorem \ref{thm10} is true.
\hfill$\Box$

{\bf Remark:} It follows from Theorem \ref{thm10} that the Hodge polygon $HP(\Delta(\bar{F}))$
can be obtained by computing the weights of elements in set $\mathcal{B}$.
The following is how to compute the Hodge polygon.
Let $u\in \mathcal{B}$.
For $n=1$, then $w(u)=\frac{u}{a_1}$ if $0\le u\le a_1$, and $w(u)=\frac{-u}{d_1}$ if $-d_1< u<0$.
For $n\ge 2$,
if $u\in C(\Delta_{0})$, then let
$$w(u)=\sum_{i=1}^n\frac{u_i}{a_i}.$$
If $u\in C(\Delta_k)$, then we let
$$w(u)=\frac{u_k}{\frac{-d_k}{1+\frac{d_1}{a_1}+...\frac{d_n}{a_n}-\frac{d_k}{a_k}}}+\sum_{i=1}^n\frac{u_i}{a_i}-\frac{u_k}{a_k}.$$

Hence $e=lcm(a_1,d_1)$ for $n=1$, and $e=lcm(a_1,...,a_n)\cdot lcm(d_1,...,d_n)$ for $n\ge 2$.
Then $e=e^{*}$.
Denote $H_{\Delta}(k)=|\big\{u\in \mathcal{B}: w(u)=\frac{k}{e}\big\}|$.
The Hodge polygon is the lower convex polygon in $\mathbb{R}^2$ with vertices $(0,0)$ and
$$\Big(\sum_{k=0}^{m}H_{\Delta}(k),\sum_{k=0}^{m}\frac{k}{e}H_{\Delta}(k) \Big), m=0,1,...,ne.$$
Thus Corollary \ref{thm2} follows.

\subsection{Newton polygon}

\begin{thm}\label{thm8}(Facial decomposition theorem, \cite{WD1}) Let $f$ be a nondegenerate Laurent polynomial with $n$ variables over $\mathbb{F}_q$.
Assume $\Delta=\Delta(f)$ is $n$-dimensional and $\Delta_1,\cdots,\Delta_h$ are all the codimension 1 faces of $\Delta$
which do not contain the origin. Let $f^{\Delta_i}$ denote the restriction of $f$ to $\Delta_i$. Then $f$ is ordinary if and only if
$f^{\Delta_i}$ is ordinary for $1\le i\le h$.
\end{thm}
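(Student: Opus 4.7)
The plan is to decompose both the Hodge polygon and the Newton polygon additively over the codimension-$1$ faces $\Delta_1,\dots,\Delta_h$ and then match the two decompositions termwise. Ordinarity of $f$ is the equality of these two polygons, and once each side is a sum of $h$ pieces the global equality will force (and be forced by) facewise equality.

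For the Hodge side, I would first observe that the cone $C(\Delta)\cap\mathbb{Z}^n$ is, up to its codimension-$1$ boundary, the disjoint union of the sub-cones $C(\Delta_1),\dots,C(\Delta_h)$, and that the weight function $w$ is linear on each $C(\Delta_i)$. Using Theorem \ref{thm0} I would choose the monomial basis $B$ of top cohomology compatibly with this partition, so that $B=\bigsqcup_{i=1}^{h}B(\Delta_i)$. The Hodge polygon $HP(\Delta(f))$ then splits as the concatenation (by increasing slope) of $h$ sub-polygons $HP(\Delta_i)$, each built from the multiset of weights of monomials in $B(\Delta_i)$.

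For the Newton side, I would invoke the standard inclusion-exclusion
$$L(f,T)^{(-1)^{n-1}}=\prod_{\delta}L^{*}(f^{\delta},T)^{(-1)^{n-1-\dim\delta}},$$
taken over all faces $\delta\subseteq\Delta$ not containing the origin, where $L^{*}$ denotes the "star" $L$-function supported on the relative interior of $\delta$. A slope analysis shows that the contributions of $L^{*}(f^{\delta},T)$ with $\dim\delta<n-1$ cancel so that the $q$-adic Newton polygon of $L(f,T)^{(-1)^{n-1}}$ is the concatenation of the Newton polygons of the top-dimensional star factors $L^{*}(f^{\Delta_i},T)$; the latter coincide with $NP(f^{\Delta_i})$ after the same concatenation. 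Combining both decompositions yields
$$NP(f)\;=\;\sum_{i=1}^{h}NP(f^{\Delta_i})\;\ge\;\sum_{i=1}^{h}HP(\Delta_i)\;=\;HP(\Delta(f)),$$
and equality globally holds if and only if it holds in each summand, which is exactly the assertion.

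The main obstacle is the Newton-polygon decomposition: proving additivity of $NP$ over the cone decomposition requires a block analysis of the Frobenius matrix in a basis adapted to the face cones, together with a careful cancellation argument for the inclusion-exclusion factors at the level of $p$-adic slopes (one must show that lower-dimensional star contributions neither add nor remove slopes from the top-dimensional pieces once the signs are collected). The cohomological input from Theorem \ref{thm0} and Proposition \ref{prop1} is what makes this bookkeeping tractable; the inequality $NP\ge HP$ for each star factor is Adolphson-Sperber's original estimate, while matching the two sides termwise is the technical heart of the argument carried out by Wan in \cite{WD1}.
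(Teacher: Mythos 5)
First, note that the paper does not prove this statement at all: it is quoted verbatim from Wan \cite{WD1} and used as a black box, so there is no internal proof to compare against. Judged on its own terms, your proposal rests on two decomposition claims, and both fail. The Hodge-side claim --- that $HP(\Delta(f))$ is the concatenation of the polygons $HP(\Delta(f^{\Delta_i}))$ --- is already false for $n=1$ with $f=x_1+\bar{\lambda}x_1^{-1}$: here $HP(\Delta(f))$ has slopes $\{0,1\}$, while each facet restriction $x_1$ and $\bar{\lambda}x_1^{-1}$ has Hodge polygon consisting of the single slope $0$, so the concatenation gives $\{0,0\}$. The obstruction is exactly the shared boundaries of the facial cones: the Hodge numbers $H_\Delta(k)$ are an alternating sum $\sum_i(-1)^i\binom{n}{i}W_\Delta(k-i)$ of lattice-point counts, and the counts $W_\Delta$ do not split additively over the cones $C(\Delta_i)$ because these overlap in positive-dimensional subcones; consequently no choice of the monomial basis $B$ in Theorem \ref{thm0} can be partitioned so that the weights in each block reproduce $HP(\Delta(f^{\Delta_i}))$. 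The Newton-side claim is worse: there is no inclusion--exclusion factorization of $L(f,T)$ over the faces of the \emph{Newton polytope}. The standard boundary decomposition you are thinking of runs over faces of the \emph{domain} (coordinate subtori), not over facets of $\Delta(f)$; the toric sum $S_k(f)$ simply does not decompose according to $\Delta_1,\dots,\Delta_h$, and in the same example the "product of facial $L$-functions" would have slopes $\{0,0\}$ while $L(f,T)$ has slopes $\{0,1\}$. So the displayed chain of equalities $NP(f)=\sum_i NP(f^{\Delta_i})\ge\sum_i HP(\Delta_i)=HP(\Delta(f))$ is false at both ends, even though the theorem itself is true.

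The idea you relegate to "the main obstacle" is in fact the actual proof. Wan works not with $L(f,T)$ directly but with the Fredholm determinant of the infinite Frobenius matrix $A_1$ indexed by all lattice points of $C(\Delta)$, whose Newton polygon is bounded below by the chain-level polygon built from all weights; ordinarity of $f$ is equivalent to equality there. The entry $a_{u,v}$ attains its minimal possible valuation only when the relevant exponents are cofacial (property (c) of the weight function), so the leading form of each extremal minor factors, modulo higher powers of $\tilde{\pi}$, into a product of blocks indexed by the facial cones $C(\Delta_i)$; nonvanishing of the product is then equivalent to nonvanishing of each block, i.e.\ to ordinarity of each $f^{\Delta_i}$. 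If you want to present a proof of Theorem \ref{thm8}, you should carry out that block-triangularization argument (or simply cite \cite{WD1}), and abandon the additivity of the two polygons as the organizing principle.
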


In what follows, we introduce some criteria to determine the nondegenerate and ordinary property.

A Laurent polynomial $f\in \mathbb{F}_q[x_1^{\pm},\cdots,x_n^{\pm}]$ is called {\it diagonal} if $f$ has exactly $n$ non-constant terms and $\Delta(f)$ is $n$-dimensional.
Let $f(x)=\sum_{j=1}^na_jx^{V_j}$ with $a_j\in \mathbb{F}_q^{*}$.
The square matrix of $\Delta$ is defined to be
$$\mathbf{M}(\Delta)=(V_1,\cdots,V_n),$$
where each $V_j$ is written as a column vector.
If $f$ is diagonal, then $\det\mathbf{M}(\Delta)\neq0$.

 Suppose $f\in \mathbb{F}_q[x_1^{\pm},\cdots,x_n^{\pm}]$ is diagonal with $\Delta(f)$. It is well-known
 that $f$ is nondegenerate if and only if
$\gcd(p,\det \mathbf{M}(\Delta))=1$.

Let $S(\Delta)$ be the solution set of the following linear system
$$\mathbf{M}(\Delta)\cdot(r_1,r_2,...,r_n)^t \equiv 0 \pmod 1, r_i\in \mathbb{Q}\cap [0,1),$$
where $(r_1,r_2,...,r_n)^t$ means transposition of $(r_1,r_2,...,r_n)$.
Then $S(\Delta)$ is an abelian group and its order is given by $|\det \mathbf{M}(\Delta)|$.
By the fundamental structure of finite abelian group, $S(\Delta)$ can be decomposed into a direct product of invariant factors
$$S(\Delta)=\bigoplus_{i=1}^n \mathbb{Z}/s_i(\Delta)\mathbb{Z},$$
where $s_i(\Delta)|s_{i+1}(\Delta)$ for $i=1,2,...,n-1$. Then Wan proved the following ordinary criterion.
\begin{prop}\label{prop2}\cite{WD2} Suppose $f\in \mathbb{F}_q[x_1^{\pm},\cdots,x_n^{\pm}]$ is a nondegenerate diagonal Laurent polynomial with
$\Delta(f)$. Let $s_n(\Delta)$ be the largest invariant factor of $S(\Delta)$. If $p\equiv 1 \mod s_n(\Delta)$, then $f$ is ordinary.
\end{prop}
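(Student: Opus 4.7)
The plan is to prove Proposition \ref{prop2} via the diagonal local theory, which reduces $L(f,T)$ to an explicit product of Gauss sums indexed by $S(\Delta)$. Write $f = \sum_{j=1}^n a_j x^{V_j}$ with $a_j \in \mathbb{F}_q^{*}$ and $\mathbf{M} := \mathbf{M}(\Delta) = (V_1,\ldots,V_n)$; by nondegeneracy, $\det\mathbf{M} \ne 0$ in $\mathbb{F}_q$, so $|S(\Delta)| = |\det \mathbf{M}|$. The first step is to derive a Gauss-sum formula for $S_k(f)$. Using the additive-character expansion $\zeta_p^{\mathrm{Tr}_k(y)} = (q^k-1)^{-1}\sum_{\chi} g_k(\chi^{-1})\chi(y)$ for $y \in \mathbb{F}_{q^k}^{*}$ on each of the $n$ monomial factors of $f$ and then summing over $x \in (\mathbb{F}_{q^k}^{*})^n$, the only surviving terms correspond to tuples of characters $(\chi_1,\ldots,\chi_n)$ satisfying $\prod_j \chi_j^{V_{j,i}} = 1$ for all $i$. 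These tuples are in bijection with $S(\Delta)$, producing the factorization
$$L(f,T)^{(-1)^{n-1}} = \prod_{r \in S(\Delta)} (1 - \gamma(r)T),$$
where $\gamma(r)$ is, up to a factor in $\mathbb{Z}_p[\zeta_p]^{*}$, a product of Gauss sums of characters of orders dividing $s_n(\Delta)$.

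Second, I would apply Stickelberger's theorem to compute $\mathrm{ord}_q \gamma(r)$. For a character $\chi = \omega^{-j}$ of $\mathbb{F}_q^{*}$ (with $\omega$ the Teichmüller character and $0 < j < q-1$), $\mathrm{ord}_p g(\chi) = \sigma_p(j)/(p-1)$, where $\sigma_p$ is the base-$p$ digit sum. Under the hypothesis $p \equiv 1 \mod s_n(\Delta)$, every component of every $r \in S(\Delta)$ has denominator dividing $s_n(\Delta) \mid p-1$. Writing $p-1 = s_n(\Delta)\cdot m$ and tracking the base-$p$ expansion for $q = p^a$, no digit carries occur in the relevant index $j_i$, hence $\sigma_p(j_i) = a\,k_i\,m'_i$ and $\mathrm{ord}_q g(\chi_{r_i}) = r_i$. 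Multiplying across the $n$ Gauss-sum factors yields $\mathrm{ord}_q \gamma(r) = \sum_{i=1}^n r_i = w(r)$, the weight of the lattice representative of $r$.

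Third, compare with the Hodge polygon. For diagonal $f$, Theorem \ref{thm10} together with the weight formulas in the remark following its proof produces a monomial basis of $H^n$ whose weights form exactly the multiset $\{w(r) : r \in S(\Delta)\}$. The slope sequence of $L(f,T)^{(-1)^{n-1}}$ computed above therefore matches the Hodge polygon term-by-term, so $f$ is ordinary.

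The main obstacle is the first step: pinning down the Gauss-sum factorization, in particular the precise bijection between the contributing character tuples and the group $S(\Delta)$, and checking that each $\gamma(r)$ genuinely decomposes as a product of Gauss sums whose characters have orders dividing $s_n(\Delta)$ (so that the invariant-factor structure can be exploited). Once this bookkeeping is in place, the remaining steps—Stickelberger plus the carrying-free digit-sum simplification under $p \equiv 1 \mod s_n(\Delta)$—reduce to a routine computation.
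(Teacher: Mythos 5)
The paper offers no proof of Proposition \ref{prop2}: it is imported verbatim from Wan \cite{WD2} and used as a black box. Your sketch reconstructs what is essentially Wan's own argument (the ``diagonal local theory'' the introduction alludes to), and the outline is correct: the multiplicative-character expansion turns $S_k(f)$ into a sum of products of Gauss sums indexed by character tuples annihilated by $\mathbf{M}(\Delta)$, these are identified with $S(\Delta)$, Stickelberger gives the slopes, and $p\equiv 1 \bmod s_n(\Delta)$ makes the base-$p$ digit sums carry-free so that the reciprocal root attached to $r=(r_1,\dots,r_n)$ has $q$-adic slope $\sum_i r_i$, which is the Hodge weight of the lattice point $\mathbf{M}(\Delta)r$. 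Three bookkeeping points you should settle. First, in your third step do not invoke Theorem \ref{thm10}, which concerns the specific polynomial $\bar F$; what you need is the diagonal analogue, namely that the lattice points of the fundamental parallelepiped $\{\mathbf{M}(\Delta)t: t\in[0,1)^n\}\cap\mathbb{Z}^n$ form a monomial basis of the top cohomology and that $w(\mathbf{M}(\Delta)r)=\sum_i r_i$ exactly (this equality holds because $V_1,\dots,V_n$ are linearly independent, so the representation in property (a) of the weight is unique). Second, passing from the formula for $S_k(f)$ to the factorization of $L(f,T)^{(-1)^{n-1}}$ requires the Hasse--Davenport relation to collapse the $k$-dependence into a single reciprocal root per tuple. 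Third, for general $p$ the tuples contributing to $S_k(f)$ form only the subgroup $\{r\in S(\Delta): (q^k-1)r\equiv 0\}$, so one would have to work with orbits of multiplication by $q$ on $S(\Delta)$; under your hypothesis $s_n(\Delta)\mid p-1\mid q^k-1$ this subtlety disappears and the index set is all of $S(\Delta)$ for every $k$. With these details filled in, the proof is complete and agrees with the source the paper cites.
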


Now we can use Wan's results to prove Theorem \ref{thm9}.

{\it Proof of Theorem \ref{thm9}.}
First we consider $n=1$. Then $\bar{F}^{\Delta_{0}}=x_1^{a_1}$ and $\bar{F}^{\Delta_{1}}=x_1^{-d_1}$.
Note that $p\nmid a_1d_1$. Hence $\bar{F}^{\Delta_0}$ and $\bar{F}^{\Delta_{1}}$
are nondegenerate. Clearly, $s_1(\Delta_0)=a_1$ and $s_1(\Delta_1)=d_1$.
From Theorem \ref{thm8} and Proposition \ref{prop2} one has that if $p\equiv 1\mod lcm(a_1,d_1)$, then $\bar{F}$ is ordinary.

In what follows, we let $n\ge 2$.
It is easy to see that $\Delta_0,\Delta_1,...,\Delta_{n}$ are all codimension 1 faces of $\Delta(\bar{F})$
which do not contain the origin.
Then let  $\bar{F}^{\Delta_j}=\sum_{i=1,i\neq j}^nx_i^{a_i}+\frac{\bar{\lambda}}{\prod_{i=1}^nx_i^{d_i}}$ for $j=1,...,n$
and $\bar{F}^{\Delta_{0}}=\sum_{i=1}^nx_i^{a_i}$.
We compute $|\det \mathbf{M}(\Delta_j)|=d_j\prod_{i=1,i\neq j}^na_i$ and $|\det \mathbf{M}(\Delta_{0})|=\prod_{i=1}^na_i$.
Note that $p\nmid \prod_{i=1}^na_id_i$. Hence $\bar{F}^{\Delta_0}$, ..., $\bar{F}^{\Delta_{n}}$
are nondegenerate.

We compute $s_n(\Delta_{0})=lcm(a_1,...,a_n)$ and $lcm(a_1,...,\hat{a}_j,..., a_n)d_j$ is divisible by $s_n(\Delta_j)$  for $j=1,...,n$, where $\hat{a}_j$
means that $a_j$ is omitted.
It follows from Proposition \ref{prop2} that if $$p\equiv 1 \mod d_jlcm(a_1,...,\hat{a}_j,..., a_n)$$ for $j=1,...,n$, and
 $p\equiv 1 \mod lcm(a_1,...,a_n)$, then
 $$NP(\bar{F}^{\Delta_j})=HP(\Delta_j)$$ for $j=0,1,...,n$.
It then follows from Theorem \ref{thm8} that if $$p\equiv 1 \mod lcm(a_1,...,a_n)\cdot lcm(d_1,...,d_n),$$  then
$$NP(\bar{F})=HP(\Delta(\bar{F})).$$
This finishes the proof of Theorem \ref{thm9}.
\hfill$\Box$

Hence under the condition of Theorem \ref{thm9}, the Newton polygon of the $L$-function $L(\bar{F}(\bar{\lambda},x),T)^{(-1)^{n-1}}$
can be computed by the Hodge polygon given by Corollary \ref{thm2}. Finally, we prove Theorem \ref{thm1}.

{\it Proof of Theorem \ref{thm1}.}
It follows from Theorem \ref{thm9} that $NP(\bar{F})=HP(\Delta(\bar{F}))$. Note that $d_i=1$ for all $i\in \{1,\cdots, n\}$.
Let $u\in \mathcal{B}$. For $n=1$, then $0\le u\le a_1$ and $w(u)=\frac{u}{a_1}$. Hence the slope sequence of $L(\bar{F}(\bar{\lambda},x),T)$ is the set
$$\big\{\frac{u}{a_1}: u= 0,\cdots, a_1\big\}.$$

For $n\ge 2$,
 Theorem \ref{thm10} tells us that the basis of $H^n(\mathcal{C}_{0}, \nabla (D))$ is
$$\mathcal{B}=\Big\{\prod_{i=1}^nx_i^{u_i} \Big\}_{0\le u_i\le a_i}-\Big\{\prod_{i=1}^nx_i^{v_i} \Big\}_{(v_i,v_j)=(0,a_j) {\rm\ for}\ 0\le i<j\le n}.$$
 Then $w(u)=\sum_{i=1}^n\frac{u_i}{a_i}$.
Suppose there exists $v\in \mathcal{B}$ such that $v\neq u$ and $w(v)=w(u)$, that is,
$$\sum_{i=1}^n\frac{u_i}{a_i}=\sum_{i=1}^n\frac{v_i}{a_i}.$$
Note that $\gcd(a_i,a_j)=1\ {\rm for\ any}\ 1\le i\neq j\le n$. Then we have
$a_i \mid (u_i-v_i)$ for all $i=1,\cdots,n$.
Note that $0\le u_i, v_i\le a_i$ for $i=1,...,n$. It follows that if
$v_i\neq 0, a_i$, then $u_i=v_i$. Hence $v_i=0, a_i$ for some $i=1,...,n$.
 Suppose that $i_1,...,i_k$ are the integers such that $v_{i_l}=0,a_{i_l}$ for $l=1,...,k$.
  Let $v'=(v_{i_1},...,v_{i_k})$. By the fact that $(v_i,v_j)\neq (0,a_j)$ for $1\le i< j\le n$, one has that $v'$ should be $$(0,...,0),(a_{i_1},0,...,0),\cdots,(a_{i_1},...,a_{i_k}),$$
   which have different weights. Hence if $w(u)=w(v)$, then $u=v$.
  Thus each element in $\mathcal{B}$ has different weight. It follows that the slope sequence of $L(\bar{F}(\bar{\lambda},x),T)^{(-1)^{n-1}}$ equals to
  $$\Big\{\sum_{i=1}^n\frac{u_i}{a_i}: u_i= 0,\cdots, a_i\Big\}$$ as set. This finishes the proof of Theorem \ref{thm1}.
\hfill$\Box$
\begin{center}
{\sc Acknowledgements}
\end{center}
We wish to thank Steven Sperber for suggesting the subject considered in this paper,
Daqing Wan for much helpful advice.
 Liping Yang would like to thank Liman Chen, Huaiqian Li and Hao Zhang for many
helpful discussions.

\bibliographystyle{amsplain}

\end{document}